\newcommand{\wh}{\widehat}
\newcommand{\lb}{\left(}
\newcommand{\rb}{\right)}
\newcommand{\wt}{\widetilde}
\newcommand{\Rb}{\mathbb{R}}
\renewcommand{\O}{\Omega}
\renewcommand{\L}{\langle}
\newcommand{\n}{\nabla}
\newcommand{\A}{\alpha}
\def\H01{{H_0^1(\Omega)}}
\def\L2{{L^2(\Omega)}}
\DeclareMathAlphabet{\bi}{OML}{cmm}{b}{it}
\DeclareMathAlphabet{\bcal}{OMS}{cmsy}{b}{n}
\DeclareMathAlphabet{\brmn}{OT1}{cmr}{bx}{n}
\newtheorem{theorem}{Theorem}[section]
\newtheorem{lemma}[theorem]{Lemma}
\newtheorem{proposition}[theorem]{Proposition}
\theoremstyle{definition}
\begin{document}
			\title{A fully non-linear optimization approach to acousto-electric tomography}
	
	\author{B. J. Adesokan$^{*}$, K. Knudsen$^{*}$, V. P. Krishnan$^{\dagger}$ and S. Roy$^{\dagger}$}

\date{}	
\maketitle

\begin{abstract} This paper considers the non-linear inverse problem of reconstructing an electric conductivity distribution from the interior power density in a bounded domain. Applications include the novel tomographic method known as acousto-electric tomography, in which the measurement setup in Electrical Impedance Tomography is modulated by ultrasonic waves thus giving rise to a method potentially having both high contrast and high resolution. We 
formulate the inverse problem as a regularized non-linear optimization problem, 
show the existence of a minimizer, and derive optimality conditions. We propose a non-linear conjugate gradient scheme for finding a minimizer based on the optimality conditions. 
	All our numerical experiments are done in two-dimensions. The experiments reveal new insight into the non-linear effects in the reconstruction. One of the interesting features we observe is that, depending on the choice of regularization, there is a trade-off between high resolution and high contrast in the reconstructed images. 
	Our proposed non-linear optimization framework can be generalized to other hybrid imaging modalities. 

	\end{abstract}
\section{Introduction}

Hybrid tomography refers to a combination of two or more existing imaging modalities.  Several modalities such as X-ray Computed Tomography (CT), Ultrasound Imaging (UI), Magnetic Resonance Imaging (MRI) offer high resolution but have poor contrast in some situations. Other imaging modalities such as Electrical Impedance Tomography (EIT) and Optical Tomography (OT) have the reverse properties, that is, they offer high contrast in various applications, but suffer from poor resolution. By combining two modalities with different nature, one can hope to achieve a tomographic modality with  both high-contrast with  high-resolution. A partial list of modalities for hybrid tomography includes Impedance-acoustic Tomography (IAT) \cite{Gebauer-Scherzer} (coupling of EIT and UI),  Acousto-electric tomography (AET) \cite{ABCTF,Zhang-Wang} (coupling of EIT and UI), Photoacoustic tomography (PAT) \cite{Kuchment-Kunyansky-2} (coupling of OT and UI), and Magnetic resonance EIT (MREIT) \cite{SJAH,KKSY,KKSW} (coupling of MRI and EIT). For an overview of the several hybrid imaging modalities for conductivity imaging we refer the reader to \cite{Widlak-Scherzer}.

In this paper we focus on a computational approach to the hybrid imaging problem relevant to  AET. Mathematically the problem is as follows: Let $\Omega\subset \mathbb{R}^n$ be an open, bounded, {convex set  with smooth boundary.} The interior conductivity distribution is given by  a scalar function bounded above and below by positive constants. 
The application of a voltage potential $f$ to the boundary $\partial \Omega$ generates an interior voltage potential $u$ that is characterized by the elliptic PDE
\begin{equation}\label{elliptic_BVP}
\begin{aligned}
-\nabla\cdot(\sigma\nabla{u})&=0~ \mbox{in }\Omega, \\
u|_{\partial\Omega}&=f.\\
\end{aligned}
\end{equation}
In EIT one measures the normal current flux through the boundary given by $\sigma \nabla u \cdot \nu,$ with $\nu$ denoting the outward unit normal on $\partial\Omega.$ 
Ultrasound waves generated in the exterior of $\Omega$ can be used to perturb the interior conductivity due to the acousto-electric effect, and by measuring the resulting perturbed boundary current flux one can, in principle, compute the interior power density \cite{ABCTF,Bal2013}
\begin{equation*}
  H(\sigma)=\sigma|\n u|^{2} \text{ in } \Omega.
\end{equation*}

The inverse problem in AET is to uniquely determine and reconstruct the conductivity $\sigma$ from several power densities
\begin{equation}\label{Internal functional}
H_{i}(\sigma)=\sigma|\n u_{f_{i}}|^{2}, \mbox{ for } 1\leq i\leq m,
\end{equation}
where $u_{f_{i}}$ is the unique solution to \eqref{elliptic_BVP} with boundary potential $f_{i}$. As one can  easily see, the problem is a non-linear inverse problem.

For the two dimensional ($n=2$) problem uniqueness is known \cite{CFdGK} for  any three ($m=3$) boundary conditions   $f_1 ,f_2$ and $f_3 = f_1+f_2$ provided that the interior gradient fields satisfy
\begin{align}\label{eq:determinant}
  \det \left[ \nabla u_1, \nabla u_2 \right] \geq C> 0.
\end{align}
This conditions state that $u_1,u_2$ has no critical points and that $\nabla u_1$ and $\nabla u_2$ are nowhere collinear. This condition is satisfied for instance for $f_1=x_1,\; f_2 = x_2$ written in Cartesian coordinates $x = (x_1,x_2),$ but in fact any two boundary conditions $f_1,f_2$ that are almost two-to-one can be taken together with $f_3$ \cite{Alessandrini-Nessi}. In dimensions $n\geq 3$ the same question is a much more delicate issue \cite{alberti2017critical,capdeboscq2015}.

{The non-linear inverse problem has been analyzed mainly from a theoretical point of view, see \cite{Bal_APDE,CFdGK,BBMT,Monard-Bal-InverseDiffusion} for a partial list of works in this direction.
One approach for studying the non-linear problem is to consider the linearized problem. This has been analyzed both theoretically and numerically, see for example, \cite{Kuchment-Steinhauer1,Kuchment-Steinhauer2,Bal_Contemporary_Math,Montalto_Stefanov,Bal_Naetar_Scherzer_Schotland,Hoffmann_Knudsen,Kuchment-Kunyansky-AET,HubmerKnudsenLiSherina2018,LiaKaramehmedovicSherinaKnudsen2018}.  It can be shown that the linearized problem {in $\Rb^{2}$} is (microlocally) solvable in case of only two boundary conditions \cite{Bal_Contemporary_Math}. However, if the interior gradient fields  $\nabla u_1, \nabla u_2$ are somewhere orthogonal in the interior, then local instabilities occur and the inversion allows propagation of singularities \cite{BHK}. Consequently, the particular choice of boundary conditions  turns out to be crucial. }

{In our work, we consider a fully non-linear approach to the optimization problem. While there are several works, most notably \cite{ABCTF, CFdGK}, that have considered non-linear approaches to the reconstruction problem, to the best of our knowledge, ours is the first work that explicitly considers a regularized bilinear least squares optimization framework in the context of AET. The main novelty of the paper is that we provide a non-linear computational framework that has the potential for reconstructing  conductivities with better contrast as well as resolution. In this context, a computational approach using edge-enhancing techniques for AET been done recently in \cite{Roy-Borzi,AdesokanJinJensenKnudsen2018}.}

We will, as in \cite{Bal_Naetar_Scherzer_Schotland}, assume  that $\sigma \in H^s(\Omega)$ with $s> \frac{n}{2}$ an integer. Then $H^s(\Omega)$ is a Banach algebra and $\sigma \in C(\overline \Omega).$ This is a rather strong regularity assumption that allows our theoretical analysis below, but most likely the results can be extended to less regular conductivities. We take two boundary conditions $m=2$ such that \eqref{eq:determinant} is satisfied. For the two dimensional problem we conjecture that for such two well-chosen boundary conditions the non-linear problem is uniquely solvable, however, we will not attempt to  prove this. Instead we take a computational approach to the fully non-linear problem. We cast the inverse problem as a {bilinear} optimization problem, show existence of a minimizer and  develop a non-linear conjugate gradient (NLCG) optimization approach for the reconstruction.

The outline of the paper is as follows: In Section \ref{sect:NLCG} we formulate the optimization problem, show existence of solutions and derive optimality conditions. In Section \ref{sect:discrete} we discretize the optimality system and outline the NLCG approach. In Section \ref{sect:numerics}, we describe the numerical implementation and carry out several computational experiments. We conclude in Section \ref{sect:conclusion}.

\section{The optimization problem and its minimizer}\label{sect:NLCG}

We consider an optimization-based approach for reconstructing $\sigma$ given $H_1(\sigma),H_2(\sigma)$. For  $\sigma \in H^s(\Omega)$, the power density function $H(\sigma)$ also belongs to $H^s(\Omega)$ \cite{Bal_Naetar_Scherzer_Schotland}, and thus it makes sense to consider the following cost functional: 
\begin{equation}\label{cost_functional}
J(\sigma, u_1, u_2) = \dfrac{1}{2}\|\sigma|\nabla u_1|^{2}-H_1^{\delta}\|^2_{L^{2}(\Omega)} + \dfrac{1}{2}\|\sigma|\nabla u_2|^{2}-H_2^{\delta}\|^2_{L^{2}(\Omega)}+ \dfrac{\alpha}{2} \|\sigma-\sigma_b\|^2_{H^{s}(\Omega)}.
\end{equation}
In the above equation, $u_1$ and $u_2$ satisfy \eqref{elliptic_BVP} with boundary data $f_1$ and $f_2$, respectively, and $\sigma_b\in H^{s}(\O)$ is a chosen background conductivity. The quantities $H_{1}^{\delta}, H_{2}^{\delta}\in L^{2}(\O)$ denote the power density functionals possibly  corrupted with noise. 
We will reconstruct $\sigma$ in the following admissible set 
\[
H^{s}_{\mathrm{ad}}(\O):=\{\sigma\in H^s(\O) \mbox{ such that } {0<\sigma_l\leq\sigma(x)\leq\sigma_u} \mbox{ for all } x\in \O\},
\] (here $\sigma_l$ and $\sigma_u$ are given positive constants)  by considering the minimization problem: 
\begin{equation}\label{min_problem}
\begin{aligned}
\min_\sigma & ~J(\sigma, u_1, u_2),\\
\mbox{ such that } &\mathcal{L}_{f_i}(u_i,\sigma)=0,\; i=1,2.
\end{aligned} \tag{P}
\end{equation}
In the rest of the paper, we consider $s=\lfloor \frac{n}{2} \rfloor +1$, where $\lfloor\cdot \rfloor$ denotes the greatest integer function. 
The equality $\mathcal L_f(u,\sigma)=0$ is a short-hand notation for \eqref{elliptic_BVP}. 
In this section, we discuss the existence of solutions to the minimization problem (\ref{min_problem}) and state the optimality system for the characterization of a minimizer. 


\subsection{Existence of a minimizer}

Our analysis of the minimization problem (\ref{min_problem}) begins with the discussion of the existence of solution of (\ref{elliptic_BVP}) which is proved in \cite{solonnikov1973a}.
\begin{proposition}\label{existence_regularity}
Let $\sigma\in H^{s}_{\mathrm{ad}}$ and $f \in H^{s+1/2}(\partial \Omega)$. Then \eqref{elliptic_BVP} has a unique solution $u\in H^{s+1}(\Omega).$ 
\end{proposition}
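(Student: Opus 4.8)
The plan is to split the argument into the construction of a weak $H^1$ solution and a regularity bootstrap that upgrades it to $H^{s+1}$. First I would remove the inhomogeneous boundary condition: since $\partial\Omega$ is smooth and $f\in H^{s+1/2}(\partial\Omega)$, the trace theorem provides an extension $F\in H^{s+1}(\Omega)$ with $F|_{\partial\Omega}=f$, and setting $u=w+F$ reduces \eqref{elliptic_BVP} to finding $w\in H^1_0(\Omega)$ with $\int_\Omega \sigma\,\nabla w\cdot\nabla\phi\,dx = -\int_\Omega \sigma\,\nabla F\cdot\nabla\phi\,dx$ for all $\phi\in H^1_0(\Omega)$. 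The bilinear form $a(v,\phi)=\int_\Omega \sigma\,\nabla v\cdot\nabla\phi\,dx$ is bounded, because $\sigma\le\sigma_u$, and coercive on $H^1_0(\Omega)$, because $\sigma\ge\sigma_l>0$ and Poincar\'e's inequality applies; the right-hand side is a bounded linear functional since $\sigma\nabla F\in L^2(\Omega)$. Lax--Milgram then gives a unique $w$, hence a unique weak solution $u\in H^1(\Omega)$, with uniqueness following because the difference of two solutions is a weak solution of the homogeneous problem and is killed by coercivity.

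Next I would bootstrap the regularity. Because $s=\lfloor n/2\rfloor+1>n/2$, the space $H^s(\Omega)$ is a Banach algebra embedding into $C(\overline\Omega)$, it is stable under reciprocals of functions bounded below (so $\sigma^{-1}\in H^s(\Omega)$), and multiplication by an $H^s$ function maps $H^b(\Omega)$ into itself for $0\le b\le s$. Differentiating $\nabla\cdot(\sigma\nabla u)=0$ by $\partial_j$ gives $\nabla\cdot(\sigma\nabla\partial_j u)=-\nabla\cdot\big((\partial_j\sigma)\nabla u\big)$, a uniformly elliptic divergence-form equation for $\partial_j u$. The scheme is an induction: assuming $u\in H^{k}(\Omega)$ with $1\le k\le s$, one estimates the right-hand side $(\partial_j\sigma)\nabla u$ in $H^{k-1}(\Omega)$ using $\partial_j\sigma\in H^{s-1}(\Omega)$ and $\nabla u\in H^{k-1}(\Omega)$, applies the boundary elliptic estimate for the operator on the smooth domain with datum $f\in H^{k+1/2}(\partial\Omega)$, and concludes $\partial_j u\in H^{k}(\Omega)$, i.e.\ $u\in H^{k+1}(\Omega)$. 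Iterating up to $k=s$ yields $u\in H^{s+1}(\Omega)$.

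The step I expect to be the main obstacle is the product estimate at the threshold: since $s$ is the smallest integer exceeding $n/2$, the factor $\partial_j\sigma$ lies only in $H^{s-1}(\Omega)$, which need not be an algebra, so controlling $(\partial_j\sigma)\nabla u$ in $H^{k-1}(\Omega)$ cannot rest on the algebra property alone and requires the sharp Sobolev multiplication inequalities $H^{a}\cdot H^{b}\hookrightarrow H^{c}$ at this borderline regularity, together with careful commutator bookkeeping when several derivatives are taken. This borderline analysis, combined with the classical trace/extension, Lax--Milgram, and interior-and-boundary elliptic estimates for smooth domains, is exactly the content of the result of \cite{solonnikov1973a} invoked in the statement.
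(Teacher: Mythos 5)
The paper does not prove Proposition~\ref{existence_regularity} at all: it is invoked as a known result, with the proof deferred wholesale to \cite{solonnikov1973a}, so there is no in-paper argument to compare against. Your reconstruction is the natural one, and its first half (trace extension of $f$, reduction to homogeneous Dirichlet data, Lax--Milgram with boundedness from $\sigma\le\sigma_u$ and coercivity from $\sigma\ge\sigma_l$ plus Poincar\'e) is complete and correct. One technical wrinkle in your bootstrap: you apply boundary elliptic estimates to $\partial_j u$, but $\partial_j u$ carries no usable Dirichlet data, so the differentiated equation cannot be treated as a boundary value problem for $\partial_j u$; the standard fix is to bootstrap $u$ itself, rewriting the equation as $-\Delta u=\sigma^{-1}\nabla\sigma\cdot\nabla u$ (legitimate since $\sigma^{-1}\in H^{s}(\Omega)$ when $\sigma\in H^{s}_{\mathrm{ad}}$, as you note) and invoking Dirichlet regularity for $-\Delta$ on the smooth domain, or else to use tangential difference quotients near $\partial\Omega$.

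The obstacle you flag at the threshold is real, but it is not as immovable as your write-up suggests, and you stop one step short of resolving it. The naive estimate indeed fails because $H^{s-1}(\Omega)$ is not an algebra when $s-1\le n/2$; however, the sharp multiplication theorem $H^{a}\cdot H^{b}\hookrightarrow H^{c}$, valid for $c\le\min(a,b)$ and $a+b-c>n/2$, closes the loop. The lossy iteration (each pass through the equation gains roughly $s-n/2-\epsilon$ derivatives) first yields $u\in H^{s+1-\epsilon}(\Omega)$ for every small $\epsilon>0$; then, since $\nabla u\in H^{s-\epsilon}(\Omega)$ with $s-\epsilon>n/2$, the product $(\sigma^{-1}\nabla\sigma)\cdot\nabla u$ lands exactly in $H^{s-1}(\Omega)$ (take $a=s-\epsilon$, $b=c=s-1$), and one final application of elliptic regularity with $f\in H^{s+1/2}(\partial\Omega)$ gives $u\in H^{s+1}(\Omega)$ with no loss. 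With that refinement your outline becomes a self-contained proof; as written, it defers the decisive endpoint step to \cite{solonnikov1973a} --- which, to be fair, is precisely what the paper does with the entire proposition.
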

We will denote this unique solution by $u(\sigma)$. Next we consider the  Fr\'{e}chet differentiability of the  mapping $u(\sigma)$ which is proved in \cite{Bal_Naetar_Scherzer_Schotland}. 

\begin{lemma}\label{differentiable_constraint}
The map $u(\sigma)$ defined by \eqref{elliptic_BVP} is Fr\'{e}chet differentiable as a mapping from $H^s(\Omega)$ to $H^{s+1}(\Omega)$.
\end{lemma}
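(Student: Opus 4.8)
The plan is to exhibit the Fréchet derivative explicitly as the solution operator of the linearized problem, and then confirm it satisfies the definition through a remainder estimate. Formally differentiating $-\nabla\cdot(\sigma\nabla u)=0$ in a direction $\eta\in H^{s}(\Omega)$, and using that the boundary data $f$ does not depend on $\sigma$, suggests that the directional derivative $v=u'(\sigma)[\eta]$ ought to solve
\begin{equation}\label{eq:lin-plan}
-\nabla\cdot(\sigma\nabla v)=\nabla\cdot(\eta\,\nabla u(\sigma))\ \text{ in }\Omega,\qquad v|_{\partial\Omega}=0.
\end{equation}
So first I would define the candidate $L_\sigma\eta:=v$ via \eqref{eq:lin-plan} and check that it is a well-defined bounded linear map $H^{s}(\Omega)\to H^{s+1}(\Omega)$.

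Boundedness is where the regularity hypothesis $s=\lfloor n/2\rfloor+1$ does the work. Since $H^{s}(\Omega)$ is a Banach algebra and $u(\sigma)\in H^{s+1}(\Omega)$ by Proposition \ref{existence_regularity}, the product $\eta\,\nabla u(\sigma)$ lies in $H^{s}(\Omega)$ with $\|\eta\,\nabla u(\sigma)\|_{H^{s}}\le C\|\eta\|_{H^{s}}\|u(\sigma)\|_{H^{s+1}}$, so its divergence lies in $H^{s-1}(\Omega)$. Applying the elliptic regularity theory underlying Proposition \ref{existence_regularity} to the inhomogeneous equation with homogeneous Dirichlet data then gives a unique $v\in H^{s+1}(\Omega)$ with $\|v\|_{H^{s+1}}\le C\|\nabla\cdot(\eta\,\nabla u(\sigma))\|_{H^{s-1}}\le C\|\eta\|_{H^{s}}$; linearity in $\eta$ is immediate. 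In the same breath I would record a Lipschitz estimate for the solution map itself: subtracting the defining equations for $u(\sigma_1)$ and $u(\sigma_2)$ gives $-\nabla\cdot(\sigma_1\nabla(u(\sigma_1)-u(\sigma_2)))=\nabla\cdot((\sigma_1-\sigma_2)\nabla u(\sigma_2))$ with zero boundary data, whence $\|u(\sigma_1)-u(\sigma_2)\|_{H^{s+1}}\le C\|\sigma_1-\sigma_2\|_{H^{s}}$ by the identical argument, with $C$ uniform over $H^{s}_{\mathrm{ad}}$.

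For the remainder, set $w:=u(\sigma+\eta)-u(\sigma)-L_\sigma\eta$. Combining the three defining equations and using $-\nabla\cdot((\sigma+\eta)\nabla u(\sigma+\eta))=0$, a short computation yields $w|_{\partial\Omega}=0$ and
\begin{equation}\label{eq:rem-plan}
-\nabla\cdot(\sigma\nabla w)=\nabla\cdot\bigl(\eta\,\nabla(u(\sigma+\eta)-u(\sigma))\bigr)\ \text{ in }\Omega.
\end{equation}
The right-hand side of \eqref{eq:rem-plan} is the divergence of a product of $\eta$ with the gradient of the difference $u(\sigma+\eta)-u(\sigma)$, which by the Lipschitz estimate is $O(\|\eta\|_{H^{s}})$ in $H^{s+1}$. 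Hence, once more by the Banach algebra and the elliptic estimate, $\|w\|_{H^{s+1}}\le C\|\eta\|_{H^{s}}\|u(\sigma+\eta)-u(\sigma)\|_{H^{s+1}}\le C\|\eta\|_{H^{s}}^{2}=o(\|\eta\|_{H^{s}})$, so $L_\sigma$ is the Fréchet derivative and the lemma follows.

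The genuine work is not in the multiplication estimates but in having, uniformly over $H^{s}_{\mathrm{ad}}$, the elliptic bound $\|v\|_{H^{s+1}}\le C\|F\|_{H^{s-1}}$ for $-\nabla\cdot(\sigma\nabla v)=F$, $v|_{\partial\Omega}=0$, with $C$ depending only on $\sigma_l,\sigma_u$, an $H^{s}$-bound on $\sigma$, and the domain. This is precisely the source-term analogue of Proposition \ref{existence_regularity}; securing $H^{s+1}$ regularity rather than merely $H^{1}$ is what forces $s>n/2$, so that $\sigma$ acts as a multiplier on the relevant Sobolev scale, and it is where smoothness and convexity of $\partial\Omega$ enter. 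I therefore expect the main obstacle to be establishing this regularity estimate with a constant uniform over the admissible set; once it is in hand, the Lipschitz bound and the quadratic remainder bound follow mechanically.
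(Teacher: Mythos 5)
Your proof is correct: the candidate derivative defined by the linearized Dirichlet problem, the Banach-algebra multiplication bounds, the Lipschitz estimate for the solution map, and the remainder equation $-\nabla\cdot(\sigma\nabla w)=\nabla\cdot\bigl(\eta\,\nabla(u(\sigma+\eta)-u(\sigma))\bigr)$ leading to the quadratic bound all check out, and this linearize-plus-remainder argument is essentially the standard proof in \cite{Bal_Naetar_Scherzer_Schotland}, which is all the paper itself offers (it proves the lemma by citation only). The single quibble is your parenthetical claim that the elliptic constant is uniform over all of $H^{s}_{\mathrm{ad}}$: since that set does not bound $\|\sigma\|_{H^{s}}$, the constant is only uniform on $H^{s}$-bounded subsets (as your final paragraph correctly states), but this is harmless because the differentiability argument only uses the estimate for coefficients in a small $H^{s}$-ball around the fixed $\sigma$, where uniform ellipticity and the $H^{s}$ bound both hold.
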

{Using Lemma \ref{differentiable_constraint}, we introduce the reduced cost functional
\begin{equation}\label{reduced_functional}
\wh{J}(\sigma) = J(\sigma, u_1(\sigma), u_2(\sigma)),
\end{equation}
where $u_i(\sigma)$, $i=1,2$ denotes the unique solution of \eqref{elliptic_BVP} given $\sigma$ and $f_i,i=1,2$.

We next state some properties of the reduced functional $\wh{J}$ which can be proved using the arguments in \cite{Bal_Naetar_Scherzer_Schotland}.
\begin{proposition}
The reduced functional $\wh{J}$, given in (\ref{reduced_functional}), is weakly lower semi-continuous (w.l.s.c.), non-negative and Fr\'{e}chet differentiable as a function of $\sigma$. 
\end{proposition}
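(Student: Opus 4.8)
The plan is to dispatch the three properties in increasing order of difficulty. Non-negativity is immediate, since each of the three summands in \eqref{cost_functional} is a squared norm multiplied by a positive constant, so $\wh J(\sigma)\ge 0$ for every admissible $\sigma$. For Fréchet differentiability I would argue by the chain and product rules, using that $H^s(\Omega)$ is a Banach algebra. By Lemma \ref{differentiable_constraint} the solution map $\sigma\mapsto u_i(\sigma)$ is Fréchet differentiable from $H^s(\Omega)$ into $H^{s+1}(\Omega)$; composing with the bounded linear gradient $\nabla\colon H^{s+1}\to H^s$ makes $\sigma\mapsto\nabla u_i(\sigma)$ differentiable, the bounded bilinear pointwise product on the algebra $H^s$ makes $\sigma\mapsto|\nabla u_i(\sigma)|^2$ differentiable into $H^s$, and multiplication by $\sigma$ keeps the value in $H^s\hookrightarrow L^2$. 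Post-composing with the smooth maps $v\mapsto\tfrac12\|v-H_i^\delta\|_{L^2}^2$ and $\sigma\mapsto\tfrac{\alpha}{2}\|\sigma-\sigma_b\|_{H^s}^2$ then yields differentiability of $\wh J$; I would record the derivative explicitly, as it feeds the optimality system later.

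The substantive point is weak lower semi-continuity. I would separate $\wh J$ into the two data-fidelity terms and the regularization term. The latter is convex and strongly continuous on $H^s$, hence weakly lower semi-continuous, so it suffices to prove that the fidelity terms are sequentially weakly continuous. Let $\sigma_k\rightharpoonup\sigma$ in $H^s(\Omega)$. Because $s>n/2$, the embedding $H^s(\Omega)\hookrightarrow C(\overline\Omega)$ is compact, so $\sigma_k\to\sigma$ uniformly; in particular $\sigma$ again satisfies $\sigma_l\le\sigma\le\sigma_u$, so the limit stays in $H^s_{\mathrm{ad}}$. A weakly convergent sequence is bounded in $H^s$, and the elliptic estimate underlying Proposition \ref{existence_regularity} then provides a uniform bound $\|u_i(\sigma_k)\|_{H^{s+1}}\le C$; since $H^{s+1}\hookrightarrow C^1(\overline\Omega)$, the gradients $\nabla u_i(\sigma_k)$ are uniformly bounded in $L^\infty(\Omega)$.

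To pass to the limit I would use an energy estimate for the potentials. Setting $w_k=u_i(\sigma_k)-u_i(\sigma)$ and subtracting the two copies of \eqref{elliptic_BVP} gives $-\nabla\cdot(\sigma_k\nabla w_k)=\nabla\cdot((\sigma_k-\sigma)\nabla u_i(\sigma))$ with $w_k|_{\partial\Omega}=0$; testing against $w_k$ and using $\sigma_k\ge\sigma_l$ yields
\[
\sigma_l\|\nabla w_k\|_{L^2}^2\le\|\sigma_k-\sigma\|_{L^\infty}\,\|\nabla u_i(\sigma)\|_{L^2}\,\|\nabla w_k\|_{L^2},
\]
so $\nabla u_i(\sigma_k)\to\nabla u_i(\sigma)$ in $L^2$. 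I would then decompose
\[
\sigma_k|\nabla u_i(\sigma_k)|^2-\sigma|\nabla u_i(\sigma)|^2=(\sigma_k-\sigma)|\nabla u_i(\sigma_k)|^2+\sigma\,(\nabla u_i(\sigma_k)-\nabla u_i(\sigma))\cdot(\nabla u_i(\sigma_k)+\nabla u_i(\sigma)),
\]
and estimate each term in $L^2$ using the uniform $L^\infty$ gradient bound together with $\|\sigma_k-\sigma\|_{L^\infty}\to 0$ and $\|\nabla w_k\|_{L^2}\to 0$. This gives $\sigma_k|\nabla u_i(\sigma_k)|^2\to\sigma|\nabla u_i(\sigma)|^2$ in $L^2(\Omega)$, whence the fidelity terms converge; adding the weakly lower semi-continuous regularization term completes the proof.

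I expect the main obstacle to be the limit passage in the nonlinear fidelity terms: weak convergence of $\sigma_k$ alone controls neither $u_i(\sigma_k)$ nor the quadratic gradient expressions, and the crux is to upgrade it, first to uniform convergence of $\sigma_k$ via the compact Sobolev embedding, and then to a uniform $H^{s+1}$ (hence $C^1$) bound on the potentials via elliptic regularity. Particular care is needed to verify that the constant in the elliptic estimate depends on $\sigma_k$ only through $\|\sigma_k\|_{H^s}$ and the ellipticity constants $\sigma_l,\sigma_u$, so that boundedness of $\{\sigma_k\}$ in $H^s$ genuinely yields the uniform gradient bound on which the entire argument rests.
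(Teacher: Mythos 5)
Your argument is correct, but it is genuinely more self-contained than what the paper offers: the paper gives no proof of this proposition at all, stating only that it ``can be proved using the arguments in \cite{Bal_Naetar_Scherzer_Schotland}''. Your treatment of non-negativity and of Fr\'echet differentiability (chain rule through Lemma \ref{differentiable_constraint}, the Banach-algebra structure of $H^s(\Omega)$, and the embedding $H^s(\Omega)\hookrightarrow L^2(\Omega)$) reconstructs exactly the mechanism of the cited reference. For weak lower semi-continuity your route is the interesting part: you prove something stronger, namely weak-to-strong sequential continuity of the fidelity terms, by upgrading $\sigma_k\rightharpoonup\sigma$ to uniform convergence via the compact embedding $H^s(\Omega)\hookrightarrow C(\overline{\Omega})$, obtaining a uniform $C^1$ bound on the potentials from elliptic regularity, and then controlling $u_i(\sigma_k)-u_i(\sigma)$ by the energy estimate; the convex regularization term is w.l.s.c.\ for free. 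This is structurally the same compactness device the paper itself uses later inside its existence theorem (there with the compact embedding $H^s(\Omega)\hookrightarrow L^4(\Omega)$ to pass to the limit in $\langle\sigma^{m_l}\nabla u_i^{m_l},\nabla v\rangle_{L^2(\Omega)}$), but your version buys more: it shows $u_i(\sigma_k)\to u_i(\sigma)$ strongly in $H^1(\Omega)$, so the limit potential is identified as $u_i(\sigma)$ rather than merely as some weak solution, which would in fact streamline the paper's existence proof. The one step you flag rather than prove --- that the constant in the estimate $\|u_i(\sigma_k)\|_{H^{s+1}(\Omega)}\le C$ behind Proposition \ref{existence_regularity} is uniform over sets bounded in $H^s(\Omega)$ and contained in $H^s_{\mathrm{ad}}(\Omega)$ --- is a genuine obligation, but it is a standard fact for divergence-form operators with coefficients bounded in $H^s$, $s>n/2$, and bounded below by $\sigma_l>0$, so identifying it explicitly (as you do) is the right level of care for this argument.
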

We are now ready to show the existence of a minimizer of the optimization problem (\ref{min_problem}) using the reduced functional $\wh{J}$. In the statement of the theorem below, we denote $H^{s}_{f}(\O)$ as the closed convex subset of $H^{s}(\O)$ with boundary trace $f$.

\begin{theorem}
Let $f_1,f_2 \in H^{s+1/2}(\partial\Omega)$. Then there exists a triplet $(\sigma^*,u_1^*,u_2^*) \in H^{s}_{\mathrm{ad}}(\O)\times H^{s+1}_{f_1}(\Omega)\times H^{s+1}_{f_2}(\Omega)$ such that $u_i^*,~ i=1,2$ are solutions to {$\mathcal{L}_{f_i}(u_i,\sigma)=0,~ i=1,2$} and $\sigma^*$ minimizes $\wh{J}$ in $H^{s}_{\mathrm{ad}}(\O)$.
\end{theorem}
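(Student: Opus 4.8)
The plan is to use the direct method of the calculus of variations, working with the reduced functional $\wh{J}$ whose properties (non-negativity, weak lower semi-continuity, Fréchet differentiability) have already been established in the preceding proposition. By reducing to $\wh{J}(\sigma) = J(\sigma, u_1(\sigma), u_2(\sigma))$, the optimization over the triplet $(\sigma, u_1, u_2)$ subject to the PDE constraints collapses into an unconstrained minimization over $\sigma$ alone in the admissible set $H^{s}_{\mathrm{ad}}(\O)$, since Proposition \ref{existence_regularity} guarantees that each admissible $\sigma$ determines the $u_i(\sigma)$ uniquely.

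First I would establish that the infimum $m := \inf_{\sigma \in H^{s}_{\mathrm{ad}}(\O)} \wh{J}(\sigma)$ is finite: this is immediate since $\wh{J} \geq 0$ and the admissible set is nonempty (e.g. any constant conductivity in $[\sigma_l, \sigma_u]$ lies in it). Next I would extract a minimizing sequence $(\sigma_k) \subset H^{s}_{\mathrm{ad}}(\O)$ with $\wh{J}(\sigma_k) \to m$. The regularization term $\tfrac{\alpha}{2}\|\sigma - \sigma_b\|^2_{H^{s}(\Omega)}$ in the cost functional is the crucial ingredient here: since $\wh{J}(\sigma_k)$ is bounded, this term forces $\|\sigma_k - \sigma_b\|_{H^{s}(\Omega)}$ to be bounded, so $(\sigma_k)$ is a bounded sequence in the Hilbert space $H^{s}(\Omega)$. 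By reflexivity of $H^{s}(\Omega)$ and the Banach–Alaoglu theorem, I can pass to a subsequence (not relabeled) with $\sigma_k \rightharpoonup \sigma^*$ weakly in $H^{s}(\Omega)$.

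The key closedness step is to verify that the weak limit $\sigma^*$ remains in the admissible set $H^{s}_{\mathrm{ad}}(\O)$. The pointwise constraints $\sigma_l \leq \sigma(x) \leq \sigma_u$ define a closed convex subset of $H^{s}(\Omega)$; since closed convex sets are weakly closed (Mazur's lemma), $\sigma^* \in H^{s}_{\mathrm{ad}}(\O)$. With $\sigma^*$ admissible, I then apply the weak lower semi-continuity of $\wh{J}$ from the preceding proposition to conclude
\begin{equation*}
\wh{J}(\sigma^*) \leq \liminf_{k\to\infty} \wh{J}(\sigma_k) = m,
\end{equation*}
which forces $\wh{J}(\sigma^*) = m$, so $\sigma^*$ is a minimizer. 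Finally, setting $u_i^* := u_i(\sigma^*)$ and invoking Proposition \ref{existence_regularity} produces the desired triplet $(\sigma^*, u_1^*, u_2^*)$ with $u_i^* \in H^{s+1}_{f_i}(\Omega)$ solving the constraints.

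The main obstacle I anticipate is the weak lower semi-continuity of $\wh{J}$—but this is precisely the content assumed in the proposition just before the theorem, so it can be cited. The delicate point hidden inside that citation is that the data-fidelity terms involve $\sigma|\nabla u_i(\sigma)|^2$, a nonlinear composition; establishing w.l.s.c. requires that weak convergence $\sigma_k \rightharpoonup \sigma^*$ in $H^{s}$ upgrades (via the compact Sobolev embedding $H^{s}(\Omega) \hookrightarrow C(\overline{\Omega})$, valid since $s > n/2$) to strong convergence in $C(\overline{\Omega})$, and that the solution map $\sigma \mapsto u_i(\sigma)$ is continuous from this topology into $H^{s+1}(\Omega)$, so that $|\nabla u_i(\sigma_k)|^2 \to |\nabla u_i(\sigma^*)|^2$ strongly in $L^2(\Omega)$. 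Granting the cited proposition, the argument above is routine; without it, this continuity-of-the-control-to-state-map is where the real work lies.
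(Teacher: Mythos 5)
Your proposal is correct, and its skeleton (direct method: minimizing sequence, coercivity from the regularization term, weak compactness, weak closedness of $H^{s}_{\mathrm{ad}}(\O)$, then w.l.s.c.) matches the paper's proof. There is, however, one genuine structural difference in how the state variables are handled. The paper extracts the states along the minimizing sequence, $u_i^{m_l} = u_i(\sigma^{m_l})$, passes to weak limits $(u_1^*,u_2^*)$ in $H^{s+1}_{f_1}(\Omega)\times H^{s+1}_{f_2}(\Omega)$, and then verifies that the limit triplet satisfies the PDE constraint: using the compact embedding $H^{s}(\O)\hookrightarrow L^4(\O)$, both $\sigma^{m_l}$ and $\nabla u_i^{m_l}$ converge strongly in $L^4$, so the products $\sigma^{m_l}\nabla u_i^{m_l}$ converge strongly in $L^2$ and the weak formulation $\langle \sigma^{m_l}\nabla u_i^{m_l},\nabla v\rangle_{L^2(\Omega)}=0$ passes to the limit. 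You bypass this entirely by defining $u_i^* := u_i(\sigma^*)$ post hoc via the well-posedness result (Proposition \ref{existence_regularity}), which is legitimate since the theorem only requires that the $u_i^*$ solve the constraint for $\sigma^*$ and that $\sigma^*$ minimize $\wh{J}$. Your route is shorter, but it leans entirely on the cited w.l.s.c. of the \emph{reduced} functional $\wh{J}$ as a black box, and that is exactly where the nonlinear difficulty lives: $\wh{J}(\sigma_k)$ involves the composition $\sigma \mapsto u_i(\sigma)$, so its lower semi-continuity along a weakly convergent sequence requires precisely the kind of compactness-plus-continuity-of-the-solution-map argument that the paper carries out inline. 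To your credit, you identify this explicitly in your final paragraph (via the embedding $H^{s}(\Omega)\hookrightarrow C(\overline{\Omega})$ and continuity of the control-to-state map); the paper's version makes that step self-contained, and in doing so also identifies the weak limits of the states with the states of the limit, which is what justifies applying lower semi-continuity of $J$ in all variables. In short: both proofs are sound, yours is the leaner reduced-functional formulation, the paper's is the more self-contained triplet formulation.
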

\begin{proof} We have that the 
boundedness from below of $\wh{J}$ guarantees the existence of a minimizing sequence
$\{\sigma^m\}\in H^{s}_{\mathrm{ad}}(\O)$ and since $\wh{J}$ is coercive, this sequence is bounded. Therefore it contains a weakly convergent
subsequence $\{\sigma^{m_l}\}$ in $H^{s}_{\mathrm{ad}}(\O)$ such that $\sigma^{m_l} \rightharpoonup \sigma^*$ (say). Since $H^{s}_{\mathrm{ad}}(\O)$ is weakly closed, we have that $\sigma^*\in H^{s}_{\mathrm{ad}}(\O).$ Since $\{\sigma^{m_l}\}$ is a minimizing sequence for $\wh{J}$, we obtain the sequence $(u_1^{m_l},u_2^{m_l})$, where $u_i^{m_l}=u_i(\sigma^{m_l})$, which is bounded in $ H_{f_{1}}^{s+1}(\Omega)\times H_{f_{2}}^{s+1}(\Omega)$. 
This implies that the sequence converges weakly to (say) $(u_1^*,u_2^*) \in H_{f_{1}}^{s+1}(\Omega)\times H_{f_{2}}^{s+1}(\Omega)$. 

{
We next show that the sequence $(\sigma^*,u_1^*,u_2^*)$ is a weak solution of (\ref{elliptic_BVP}). First note that the triplet $(\sigma^{m_l},u_1^{m_l},u_2^{m_l})$ is a weak solution of (\ref{elliptic_BVP}) for all $m_l\in \mathbb{N}$, that is $\langle \sigma^{m_l}\nabla u_i^{m_l},\nabla v\rangle_{L^2(\Omega)} = 0$ for any $v\in H^{1}_0(\Omega)$.
Now, since $H^{s}(\O)$ is compactly embedded in $L^4(\O)$, we have that $\sigma^{m_l}$ and $\nabla u_i^{m_l}$ converges strongly to $\sigma^*$ and $u_i^*$ respectively in $L^4(\O)$. Consequently $\sigma^{m_l}\nabla u_i^{m_l}$ converges strongly to $\sigma^*\nabla u_i^*$ in $L^2(\Omega).$ Hence $0 = \langle \sigma^{m_l}\nabla u_i^{m_l},\nabla v\rangle_{L^2(\Omega)} \rightarrow \langle \sigma^*\nabla u_i^*,\nabla v\rangle_{L^2(\Omega)}$ all for $v\in H_0^1(\Omega)$ showing that $(\sigma^*,u_1^*,u_2^*)$ is the unique solution of (\ref{elliptic_BVP}).}

Now by w.l.s.c. of $J$, we have 
$$
\wh{J}(\sigma^*) \leq \liminf_{m_l\rightarrow \infty} \wh{J}(\sigma^{m_l}) = \inf_{\sigma\in H^{s}_{\mathrm{ad}}(\O)} \wh{J}(\sigma).
$$
Thus, $\sigma^*$ minimizes the reduced functional $\wh{J}$ and this proves the existence of a minimizer of the optimization problem (\ref{min_problem}).
\end{proof}
}

\subsection{The reduced functional and optimality conditions}
In this section, we state the first order necessary optimality conditions for the minimizer of \eqref{cost_functional}.

Correspondingly, a local minimum $\sigma^*\in H^s_{\mathrm{ad}}(\Omega)$ of $\wh{J}$ is characterized by the first-order necessary optimality conditions given by 
\[
\Big{\langle}\nabla \wh{J}(\sigma^*),\widetilde{\sigma}-\sigma^*\Big{\rangle}_{L^{2}(\O)} \geq 0, \mbox{for all }\widetilde{\sigma} \in H^{s}_{\mathrm{ad}}(\O),
\]
where $\nabla \widehat{J}(\sigma^*)$ denotes the $L^2(\Omega)$ gradient and in the inner product above, we interpret $\n \wh{J}(\sigma^{*})$ as the Riesz representative of the Frech\'et derivative of $\wh{J}$ in $L^{2}$ evaluated at $\sigma^{*}$.  
It is well known (see for instance \cite{neit:tiba}) that using the Lagrange functional,
\[
L(\sigma, u_1, u_2, v_1, v_2) = J(\sigma,u_1,u_2) + \langle \sigma \nabla u_1,\nabla v_1\rangle_{L^{2}(\O)} + \langle \sigma \nabla u_2,\nabla v_2\rangle_{L^{2}(\O)},
\] in the framework of the adjoint method, the condition 
$\Big{\langle}\nabla \widehat{J}(\sigma^*),\widetilde{\sigma}-\sigma^*\Big{\rangle}_{L^{2}(\O)} \geq 0$, results in the following optimality system, consisting of the forward and adjoint equations and a variational inequality. We have
\begin{align}
\label{for_1}& -\nabla\cdot(\sigma\nabla{u_1})=0~ \mbox{in }\Omega, \quad
u_1|_{\partial\Omega}=f_1,\\
\label{adj_1}&-\nabla\cdot(\sigma\nabla{v_1})=2\nabla\cdot(\sigma [\sigma |\nabla u_1|^2-H_1^{\delta}]\nabla u_1)~ \mbox{in }\Omega, \quad
v_1|_{\partial\Omega}=0,\\
\label{for_2}&-\nabla\cdot(\sigma\nabla{u_2})=0~ \mbox{in }\Omega, \quad
u_2|_{\partial\Omega}=f_2,\\
\label{adj_2}&-\nabla\cdot(\sigma\nabla{v_2})=2\nabla\cdot(\sigma [\sigma |\nabla u_2|^2-H_2^{\delta}]\nabla u_2)~ \mbox{in }\Omega, \quad
v_2|_{\partial\Omega}=0,\\
\notag &\Big{\langle} (\sigma |\nabla u_1|^2-H_1^{\delta})|\nabla u_1|^2+(\sigma |\nabla u_2|^2-H_2^{\delta})|\nabla u_2|^2 + \\
&\label{optimality}\quad \quad\alpha\sum_{k=0}^{s} (-1)^k \Delta^k(\sigma-\sigma_b) + \nabla u_1 \cdot \nabla v_1 + \nabla u_2 \cdot \nabla v_2, \widetilde{\sigma}-\sigma \Big{\rangle}_{L^{2}(\O)} \geq 0,
\end{align}
for all $\wt{\sigma}\in H^{s}_{\mathrm{ad}}(\O)$.

\section{Discretization of the optimality system}\label{sect:discrete}
\subsection{Numerical discretization of the forward and adjoint problems}

In this section, we discuss the numerical approximation to the forward and adjoint elliptic equations in (\ref{for_1})--(\ref{adj_2}) using the finite element method. We describe the discretization schemes for solving (\ref{for_1})-(\ref{adj_1}). The same schemes would be used for (\ref{for_2})-(\ref{adj_2}).
 We first note that for a two-dimensional or three-dimensional setup $s=\lfloor \frac{n}{2} \rfloor +1=2$, i.e. $\sigma\in H^2(\Omega)$. This implies that the regularization term in (\ref{cost_functional}) is $\dfrac{\alpha}{2} \|\sigma-\sigma_b\|^2_{H^{2}(\Omega)}$. Consequently, the left hand side of (\ref{optimality}) involves a  fourth order PDE and is computationally very expensive to solve. Therefore, in the numerical simulations below, we use lower order regularization terms to determine the optimality system and use the NLCG method with the corresponding reduced gradient. More specifically, we use $L^2$ and $H^1$ regularization terms (corresponding to $s=0$ and $s=1$ respectively). The corresponding reduced gradients used in the NLCG method are the $L^2$ and the $H^1$ gradients. We emphasize that though the optimal solution $\sigma^*$ obtained through this procedure is less regular, the method is computationally efficient. Furthermore, using the $H^1$ gradient, we have a good approximation of the desired $\sigma\in H^2(\Omega)$.

The weak form representation of (\ref{for_1}) is as follows: Find $u\in H^{1}(\Omega)$ with boundary trace $f$ such that
\begin{equation}\label{weak_form}
\int_\Omega~\sigma\nabla u\cdot\nabla \widetilde{u} =0
\end{equation}
for all $\widetilde{u} \in H^{1}_0(\Omega)$.
Let us define the space of continuous functions which are piecewise polynomials of degree $k$ in a triangle element $K$ belonging to a mesh $\tau_h$ as follows
\begin{equation}\label{space}
W_{f,h}^{k}(\O)=\lbrace u_h\in C^{0}(\overline{\Omega}):u_h|_K\in\mathbb{P}_k\mbox{ for all }  K\in\tau_h\rbrace\cap \lbrace u_h=f \mbox{ on } \partial \Omega \rbrace.
\end{equation}
We also define the bilinear form
\begin{equation}\label{bilinear_form}
A(u,\widetilde{u}) = \int_\Omega~\sigma\nabla u\cdot\nabla \widetilde{u}.
\end{equation}
Then the discrete scheme for (\ref{for_1}) is given as follows: Find $u_h \in W_{f,h}^{k}(\O)$, such that
\begin{equation}\label{vel_scheme}
A(u_h,\widetilde{u}_h)= 0,
\end{equation}
for all $\widetilde{u}_h \in {W}_{0,h}^{k}(\O)$, where

	
	\begin{equation}\label{space_adj}
	{W}_{0,h}^{k}=\lbrace u_h\in C^{0}(\overline{\Omega}):u_h|_K\in\mathbb{P}_k\quad\forall K\in\tau_h\rbrace\cap \lbrace u_h=0 \mbox{ on } \partial \Omega \rbrace.
	\end{equation}
For the adjoint equation (\ref{adj_1}), we define the linear form
\begin{equation}\label{linear_form}
L(\widetilde{v}) = -2\int_\Omega~(\sigma [\sigma |\nabla u_{h}|^2-H_1^{\delta}]\nabla u_{h})\cdot\nabla \widetilde{v},
\end{equation}
where $\nabla u_{h}$ is the derivative of the solution $u_{h}$ to (\ref{vel_scheme}).
Then the discrete scheme for (\ref{adj_1}) is given as follows: Find $v_h \in W_{0,h}^k$, such that
\begin{equation}\label{adj_scheme}
A(v_h,\widetilde{v}_h)= L(\widetilde{v}_h),
\end{equation}
for all $\widetilde{v}_h \in {W}_{0,h}^{k}$ defined in \eqref{space_adj}, and $A(u,v)$ is the bilinear form defined in (\ref{bilinear_form}).

\subsection{The reduced $H^1$ gradient}

For the case $s=1$, in the optimality system (\ref{optimality}), the following reduced $L^2$ gradient components appear
\begin{equation}\label{l2grad}
\begin{aligned} 
&\nabla \widehat{J}(\sigma) =\Big[ \lb\sigma |\nabla u_1|^2-H_1^{\delta}\rb|\nabla u_1|^2+\lb\sigma |\nabla u_2|^2-H_2^{\delta}\rb|\nabla u_2|^2 + \\
&\quad\quad\quad\quad\quad \alpha(\sigma-\sigma_b)-\alpha \Delta(\sigma-\sigma_b) + \nabla u_1 \cdot \nabla v_1 + \nabla u_2 \cdot \nabla v_2\Big],
\end{aligned}
\end{equation}
where $\Delta$ is the distributional Laplacian.
Let  us now discuss the unconstrained case. In this 
case, optimality requires $\nabla \widehat{J}(\sigma) =0$. Because of the $H^1$ cost for $\sigma-\sigma_b$, we have a setting that allows to include boundary conditions
on the conductivity $\sigma$. By considering the derivation of 
the optimality system above using the Lagrange formulation, we  find that a convenient choice is to require $\sigma-\sigma_b=0$ on $\partial \Omega$ as the conductivity distribution near the boundary is constant and equals to the background distribution $\sigma_b$.

We wish to apply a gradient-based optimization scheme where the residual of \eqref{l2grad} is used such that $\sigma\in H^1(\Omega)$. For this purpose, we cannot use this residual directly for updating the conductivity, since it 
is not in $H^{1}(\Omega)$. Therefore, it is necessary 
to determine the reduced $H^1$ gradient. This is done based on the following fact 
$$
\Big\langle\nabla \widehat J(\sigma)_{H^1(\O)}, \varphi \Big\rangle_{H^1(\Omega)}=
\Big\langle\nabla \widehat J(\sigma), \varphi\Big\rangle_{L^2(\Omega)},
$$
where $\varphi\in H_0^1(\Omega)$.
Using the definition of the $H^1$ inner product and integrating by
parts, we have that the $H^1$ gradient is obtained by solving the 
following boundary value problem 
\begin{eqnarray}
-\Delta (\nabla\widehat J(\sigma)_{H^1(\O)}) +\nabla\widehat J(\sigma)_{H^1(\O)} = \nabla\widehat J(\sigma)  \mbox{ in }  \Omega \label{gradH1x}  \\
\nabla\widehat J(\sigma)_{H^1(\O)} =0   \mbox{ on }  \partial \Omega.  \label{gradH1xBC} 
\end{eqnarray}
where (\ref{gradH1x})-(\ref{gradH1xBC}) is defined in the weak sense.
The solution to this problem provides the appropriate gradient to be used 
in a gradient update of the conductivity that includes projection to ensure $\sigma\in H^{1}_{\mathrm{ad}}(\O)$. 

\subsection{A projected NLCG optimization scheme\label{sec:optimization}}

 We solve the optimization problem (\ref{min_problem}) by implementation of a projected non-linear conjugate scheme (NLCG); see \cite{neit:tiba} in $L^2$ and $H^1$ spaces. Such a scheme is an extension of the conjugate gradient method to constrained non-linear optimization problems. In the following discussing we denote $X_h$ as both the discrete approximations to the $L^2(\O)$ and $H^1(\O)$ spaces. We also denote the corresponding discrete inner product and norm as $\langle\cdot,\cdot\rangle_{X_h}$ and $\|\cdot\|_{X_h}$, respectively, where$\|\cdot\|^2_{X_h}=\langle \cdot,\cdot\rangle_{X_h}$ . For the definition of the discrete $L_h^2,H_h^1$ inner product we refer to \cite{Suli}. To describe this iterative method, we start with an initial guess $\sigma_0$ for the conductivity and the corresponding search direction: 
\[
d_0=-g_0:= -(\nabla\widehat{J}(\sigma_0))_{X_h},
\]
where $\nabla\widehat{J}(\sigma_0)_{X_h}$ represents the discrete $L^2$ or $H^1$ gradient computed through a finite element discretization of (\ref{optimality}) or (\ref{gradH1x})--(\ref{gradH1xBC}), respectively. The search directions are obtained recursively as
\begin{equation}
d_{k+1} = -g_{k+1}+\beta_kd_k,
\end{equation}
where $g_k=\nabla\widehat{J}(\sigma_k)_{X_h},~k=0,1,\hdots$ and the parameter $\beta_k$ is chosen according to the formula of Hager-Zhang \cite{hag:zha}  given by
\begin{equation}\label{HG}
\beta_k^{HG} = \frac{1}{d_k^Ty_k}\left({y_k-2d_k\frac{\|y_k\|_{X_h}^2}{d_k^Ty_k}}\right)^Tg_{k+1},
\end{equation}
where $y_k = g_{k+1}-g_k$.

We update the value of the conductivity $\sigma$ with a steepest descent scheme given as follows
\begin{equation}\label{globGradOpt}
\sigma_{k+1} = \sigma_k + \alpha_k \,  d_k ,
\end{equation}
where $k$ is a index of the iteration step and $\alpha_k >0$ is a step length obtained using a line search algorithm as in \cite{MA}. For this line search, we use the following Armijo condition of sufficient decrease of $\widehat{J}$
\begin{equation}\label{arm}
\widehat{J}(\sigma_k+\alpha_kd_k)\leq \widehat{J}(u_k)+\delta\alpha_k\langle\nabla\widehat{J}(\sigma_k),d_k\rangle_{X_h},
\end{equation}
where $0 < \delta < 1/2$.

Notice that this gradient procedure should be combined with
a projection step onto $H^s_{\mathrm{ad}}$. Therefore, we consider the following
\begin{equation}\label{localGradOpt}
\sigma_{k+1}
= P_{L}\left[ \sigma_k + \alpha_k \, d_k \right] ,
\end{equation}
where
$$
P_{L}\left[\sigma\right] = \max\lbrace \sigma_l,\min \lbrace \sigma_u, \sigma\rbrace\rbrace.
$$

The projected NLCG scheme can be summarized as follows: 

\begin{enumerate}
\item Input: initial approximation, $\sigma_0$. Evaluate $d_0 = -\nabla\widehat{J}(\sigma_0)_{X_h}$, index $k=0$, maximum $k=k_{\mathrm{max}}$,
tolerance =tol.
\item While $(k<k_{max})$ do
\begin{enumerate}
\item Set $\sigma_{k+1} = P_{L}\left[ \sigma_k + \alpha_k \, d_k \right]$, where $\alpha_k$ is obtained using a line-search algorithm.
\item Compute $g_{k+1} = \nabla\widehat{J}(\sigma_{k+1})_{X_h}$.
\item Compute $\beta_k^{HG}$ using (\ref{HG}).
\item Set $d_{k+1}=-g_{k+1}+\beta_k^{HG}d_k$.
\item If $\|\sigma_{k+1}-\sigma_k\|_{X_h} < \mbox{tol.}$, terminate.
\item Set $k=k+1$.
\end{enumerate}
\item End while.
\end{enumerate}
\section{Numerical experiments}\label{sect:numerics}
In this section we discuss the numerical implementation of the NLCG scheme for the minimization problem (\ref{min_problem}). 
The domain of definition is the unit circle centered at $(0,0)$. We choose the value of the background conductivity as $\sigma_b=1.0$ and the lower and upper values $\sigma_l=0.01,~\sigma_u=4.0$. {The initial guess for $\sigma$ in the NLCG algorithm is chosen to be 1.} The computations are done in FENICS with $\mathbb{P}_2$ elements for the electric potential $u$ and $\mathbb{P}_1$ for the conductivity function $\sigma$ in case $s=0$. In the case $s=1$, we note that the optimality condition (\ref{optimality}) contains a Laplacian of $\sigma$ and thus we use $\mathbb{P}_2$ elements for $\sigma$. The average mesh size for the optimization algorithm is 0.01. The plot of the mesh is shown in Figure \ref{fig:mesh}. We choose the regularization parameter $\A=0.1$ for all the numerical experiments. 
\begin{figure}[ht]
\centering
\subfigure[Mesh]{\includegraphics[scale=0.32,keepaspectratio]{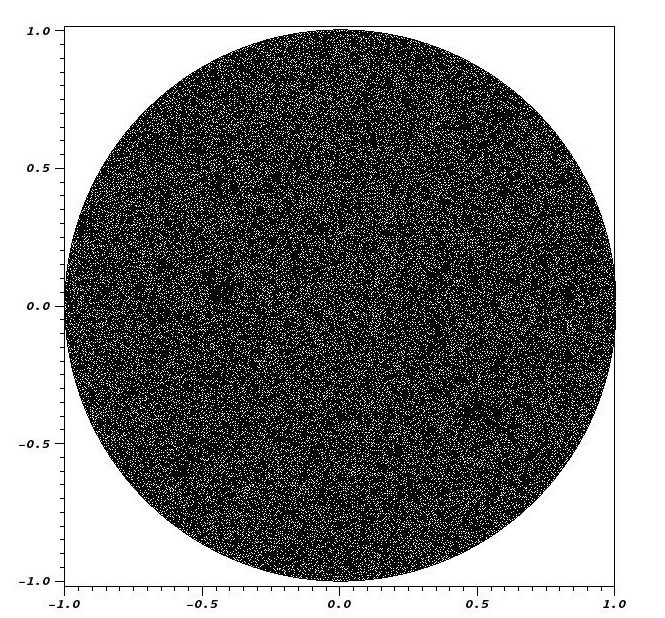}}
\subfigure[Zoomed view of the mesh]{\includegraphics[scale=0.32,keepaspectratio]{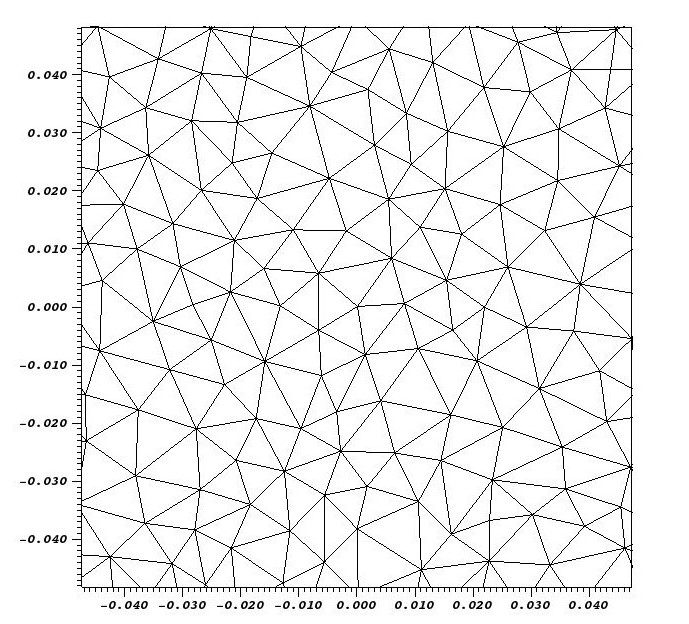}}
  \caption{The mesh for the experiments }
\label{fig:mesh}
  \end{figure}
 
 In our numerical simulations, we consider the following sets of boundary conditions:
 \begin{equation}\label{types_bc}
 \begin{aligned}
 &\mbox{BC1: } f_1=x,~f_2=\dfrac{x+y}{\sqrt{2}},\\
 &\mbox{BC2: } f_1=x,~f_2=y,\\
 &{\mbox{BC3: } f_1=x,~f_2=y,~f_3=\dfrac{x+y}{\sqrt{2}}.}\\
 \end{aligned}\tag{BC}
 \end{equation}
 With either of these choice of boundary conditions, $u_{1}$ and $u_{2}$ have no critical points and $\nabla u_1,\nabla u_2$ are non-parallel in $\overline{\Omega}$ \cite{Alessandrini-Nessi}. This choice of boundary conditions is motivated by the linear reconstruction algorithms, where BC1 and BC2 lead to different qualitative behavior in the reconstructions \cite{BHK}.

Unless otherwise explicitly stated, the boundary condition in the numerical experiments is BC1; see (\ref{types_bc}).
To generate the data $H$ in (\ref{Internal functional}),
we choose a $\sigma$ and solve (\ref{for_1}) on a finer mesh with mesh size $h = 0.005$. We then compute the gradients of $u$ using a finite element discretization and thus compute the internal data $H$. Finally, we project the data onto the computational mesh for our NLCG algorithm. 

{In general, we expect a better resolution of the reconstructions with $L^2$ regularization than with $H^1$ regularization.  Recall that at each iterative step, the update for $\sigma$ is found by solving \eqref{optimality}. In the case of $H^{1}$ regularization, since \eqref{optimality} involves an additional Laplacian term, the obtained update for $\sigma$ is more regular compared to that with the $L^{2}$ regularization set up. Due to this, the artifacts with $H^{1}$ regularization are less pronounced leading to reconstructions with better contrast. For the same reason, the edges are enhanced using $L^{2}$ regularization resulting in images with better resolution. Also note that more artifacts are present in images with $L^{2}$ regularization compared to that with $H^{1}$ regularization.
}

Test Case I: In the first test case, we consider a phantom represented by a disk and the conductivity $\sigma$ is defined as follows:

Let $r=\sqrt{(x-0.2)^2+(y-0.2)^2}$. Define
\begin{equation}
\sigma(x) =
\begin{cases}
2.0,&\qquad r < 0.3,\\
1.0,&\qquad r \geq 0.3.\\
\end{cases}
\end{equation}

The plots of the actual and reconstructed $\sigma$ with the boundary condition BC1 given in (\ref{types_bc}) {and with various values of $L^{2}$ and $H^{1}$ regularization parameter $\A$}  are shown in Figure \ref{disk_sigma}. {We observe that as $\A$ increases, the contrast in both the cases decreases. Regardless of the value of the regularization parameter $\A$, we observe better resolution with $L^{2}$ regularization and better contrast with $H^{1}$ regularization. We also compare our algorithm with the paramterix method of \cite{Kuchment-Kunyansky-AET} (shown in  Figure \ref{disk_parametrix}), and while there is a slightly better resolution of the edges compared to our algorithm, there is a substantial loss of contrast in the parametrix method.}

\begin{figure}[h]
	\centering
	\subfigure[Actual phantom]{\includegraphics[scale=0.17,keepaspectratio]{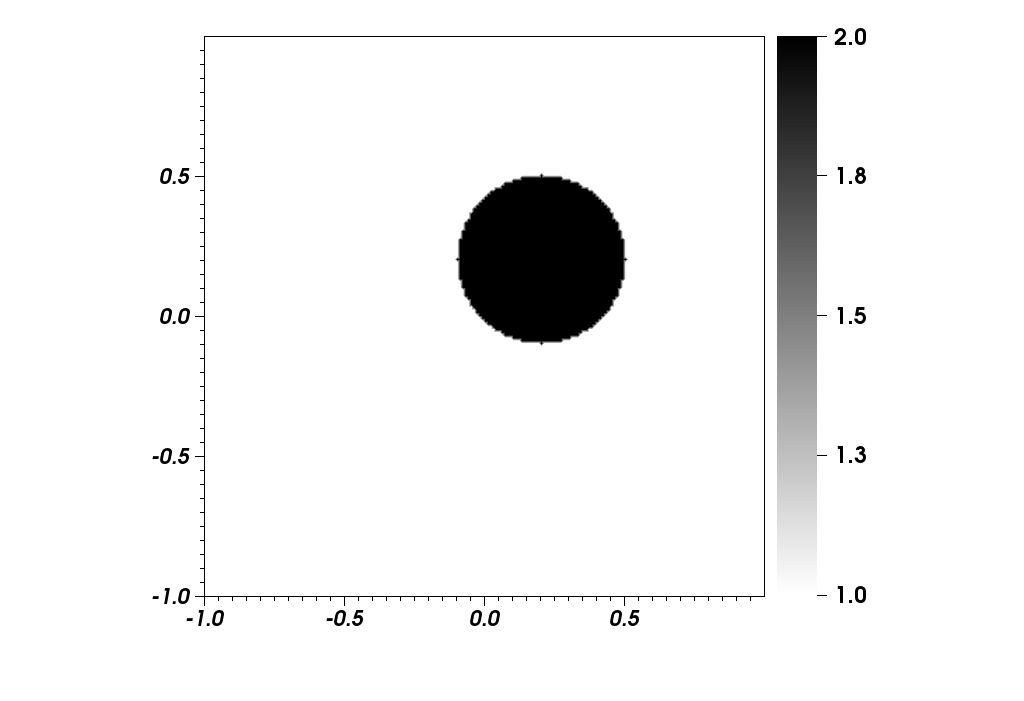}\label{disk_actual}}\hspace{5mm}
	\subfigure[ Parametrix method \cite{Kuchment-Kunyansky-AET} (Done in Matlab)]{\includegraphics[height=1.65in,width=1.95in]{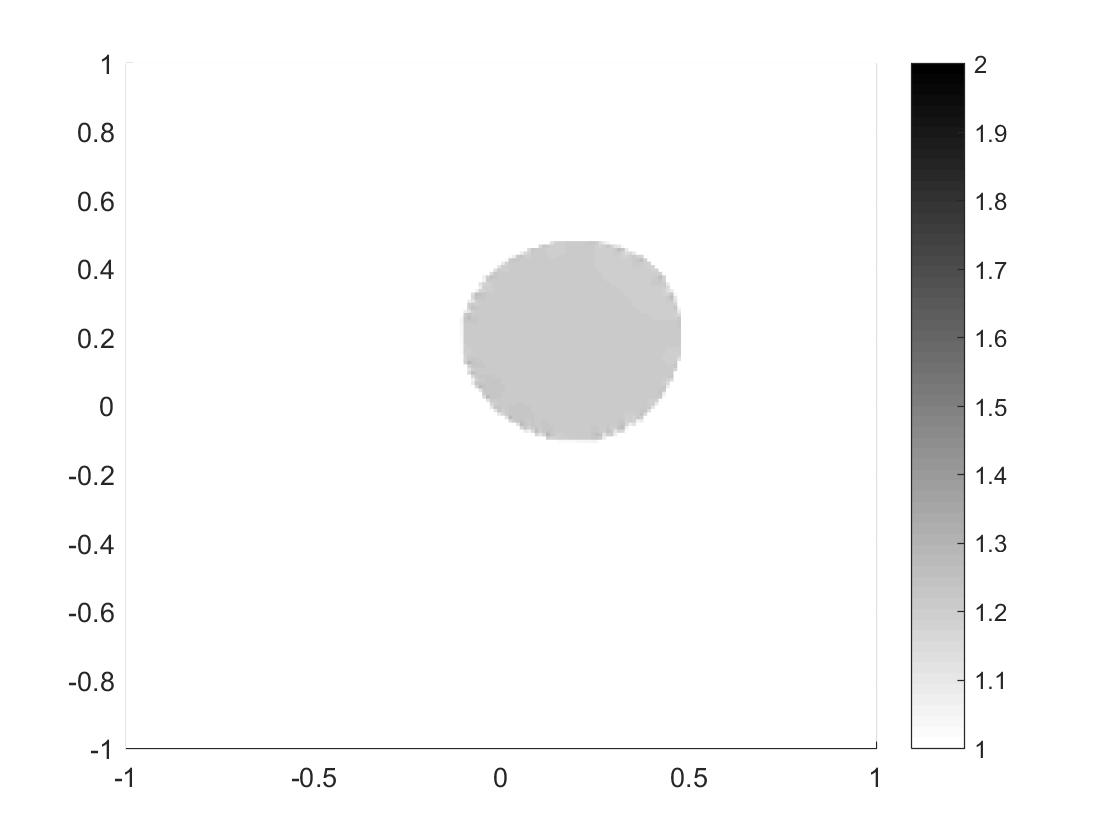}\label{disk_parametrix}}\\
	\subfigure[ $s=0,~\alpha=0.1$]{\includegraphics[scale=0.17,keepaspectratio]{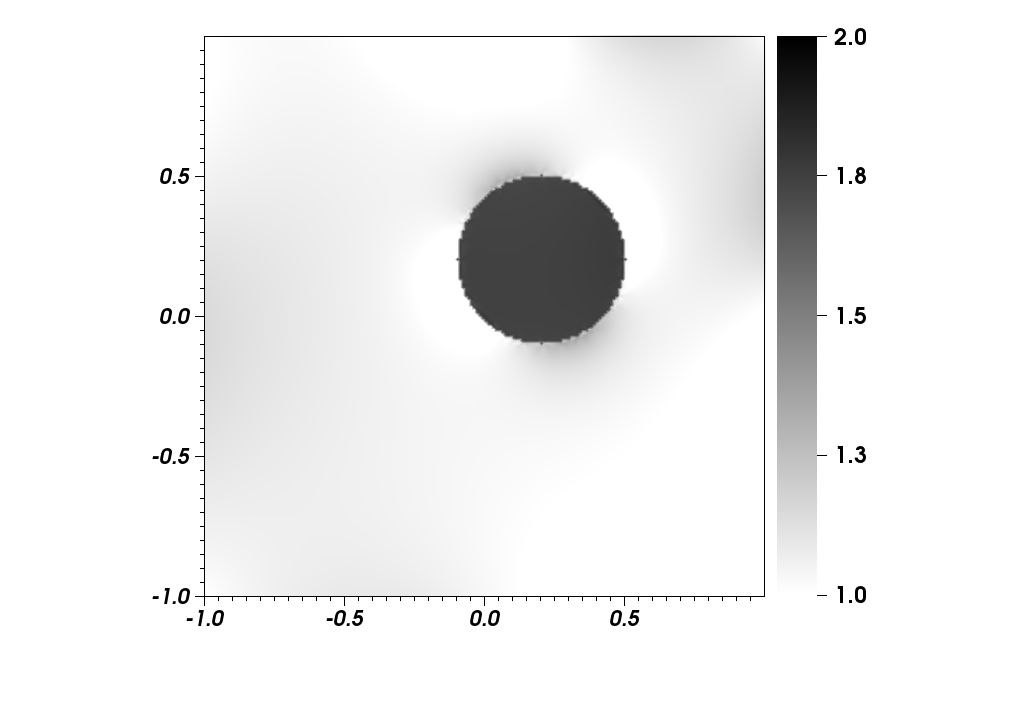}\label{disk_L2_1}}
	\subfigure[ $s=1,~\alpha=0.1$]{\includegraphics[scale=0.17,keepaspectratio]{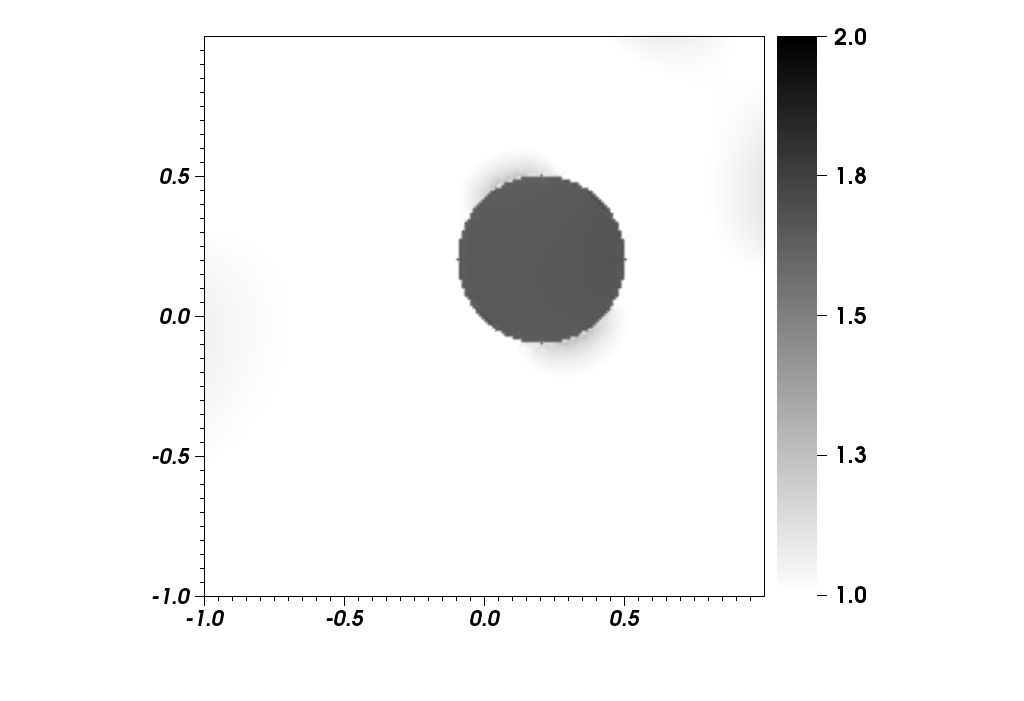}\label{disk_H1_1}}\\
    \subfigure[ $s=0,~\alpha=0.4$]{\includegraphics[scale=0.17,keepaspectratio]{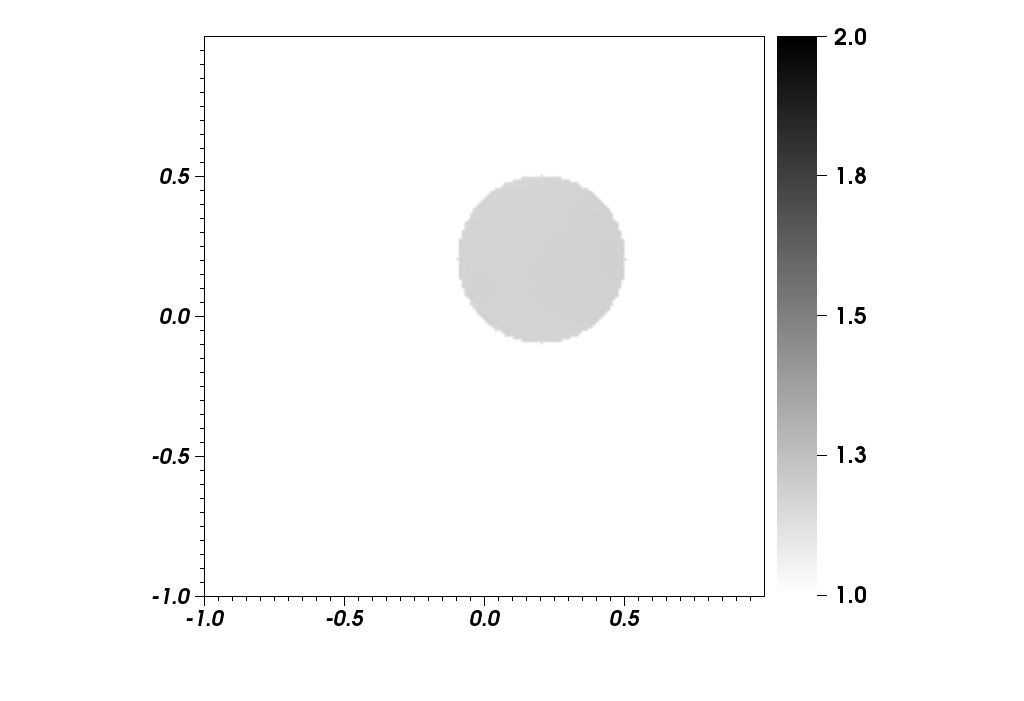}\label{disk_L2_4}}
    \subfigure[ $s=1,~\alpha=0.4$]{\includegraphics[scale=0.17,keepaspectratio]{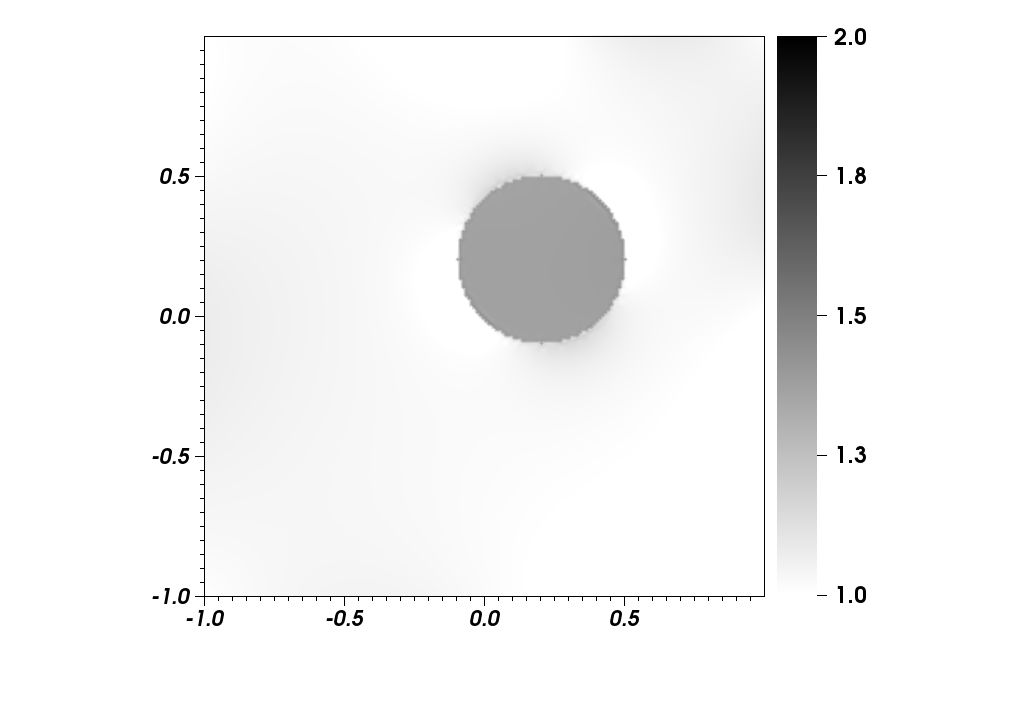}\label{disk_H1_4}}\\
    \subfigure[ $s=0,~\alpha=0.7$]{\includegraphics[scale=0.17,keepaspectratio]{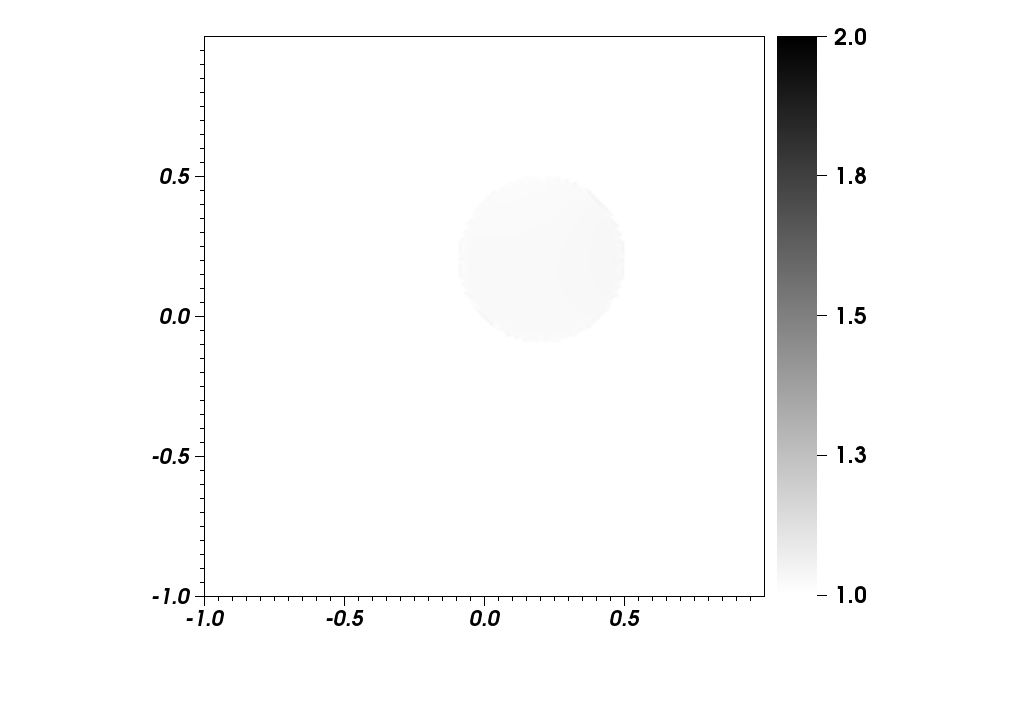}\label{disk_L2_7}}
    \subfigure[ $s=1,~\alpha=0.7$]{\includegraphics[scale=0.17,keepaspectratio]{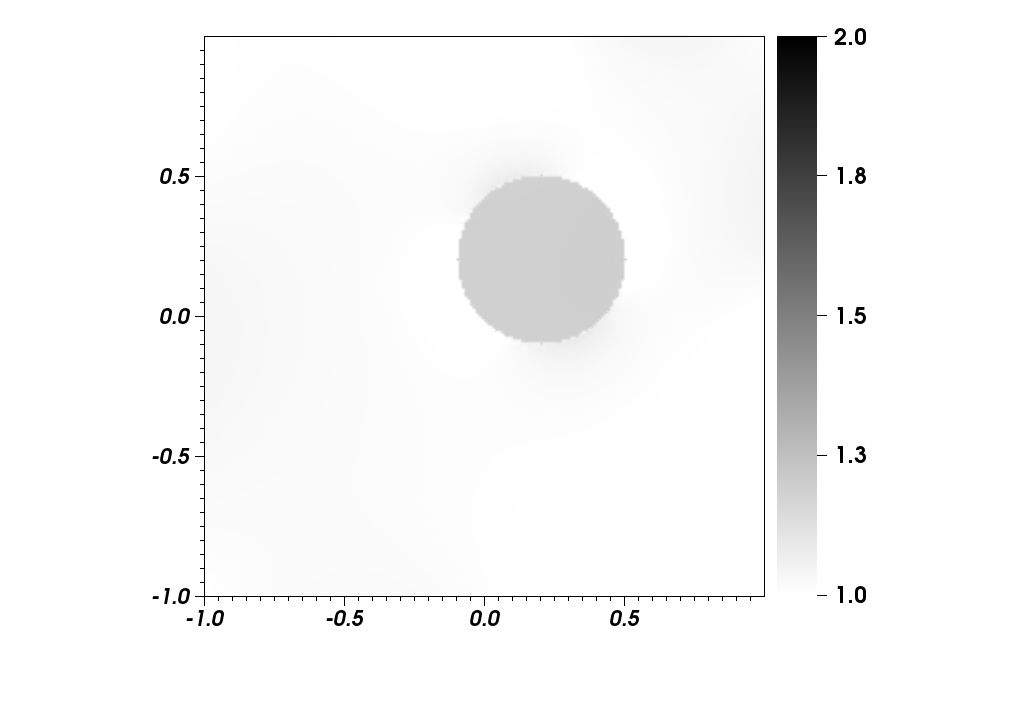}\label{disk_H1_7}}\\
	
	\caption{Test Case I: The actual and reconstructed Gaussian phantom for $\A=0.1$ with boundary condition BC1 and with the parametrix method. }
	\label{disk_sigma}
\end{figure}

Test Case II: In the second test case, we consider the heart and lung phantom for $\sigma$ as described in \cite{Mueller}. It has a background value of 1.0 that is perturbed in two ellipses (representing the lungs) where the value is 0.5 and in a circular region (representing the heart) where the value is 2.0. 

 In order to demonstrate the robustness of our optimization framework, we add $10\%$ and $25\%$ white Gaussian noise in the exact interior data $H$. The noise is added to $H$ in the following way: Let $\delta$ denote the noise level. Then 
 \begin{equation}\label{noisy data}
 H^\delta = H + \delta \cdot H \cdot N,
 \end{equation}
 where $H^\delta$ is the 2D-matrix of noisy data, $H$ is the 2D-matrix of data without noise and $N$ is the 2D-matrix of values each obtained from a standard normal distribution. In \eqref{noisy data}, the product refers to entrywise multiplication.
 
  The reconstructions of $\sigma$ with $L^{2}$ and $H^{1}$ regularizations 
  are shown in Figures \ref{heart_lung_L2} and \ref{heart_lung_H1}, respectively. The simulations show that our algorithm is very robust in the presence of noisy data. Furthermore, better contrast is obtained with the $H^{1}$ regularization term in comparison to the $L^{2}$ case.

\begin{figure}[h]
	\centering
	\subfigure[Actual phantom]{\includegraphics[scale=0.17,keepaspectratio]{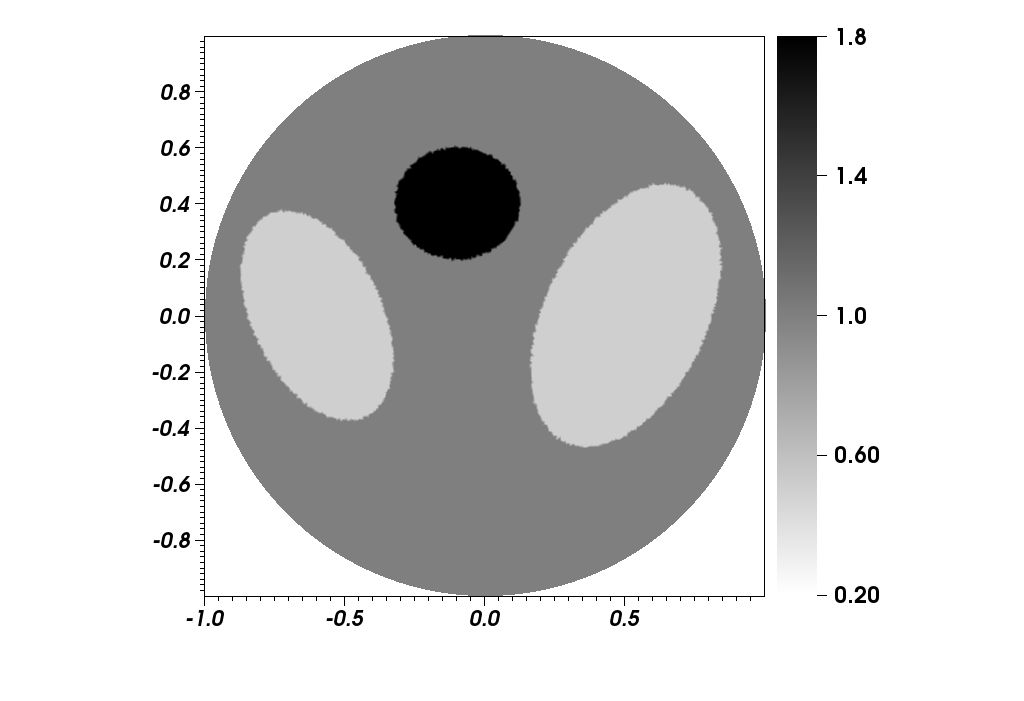}\label{heart_lung_actual}}\\
	\subfigure[ $s=0,~\alpha=0.1$]{\includegraphics[scale=0.17,keepaspectratio]{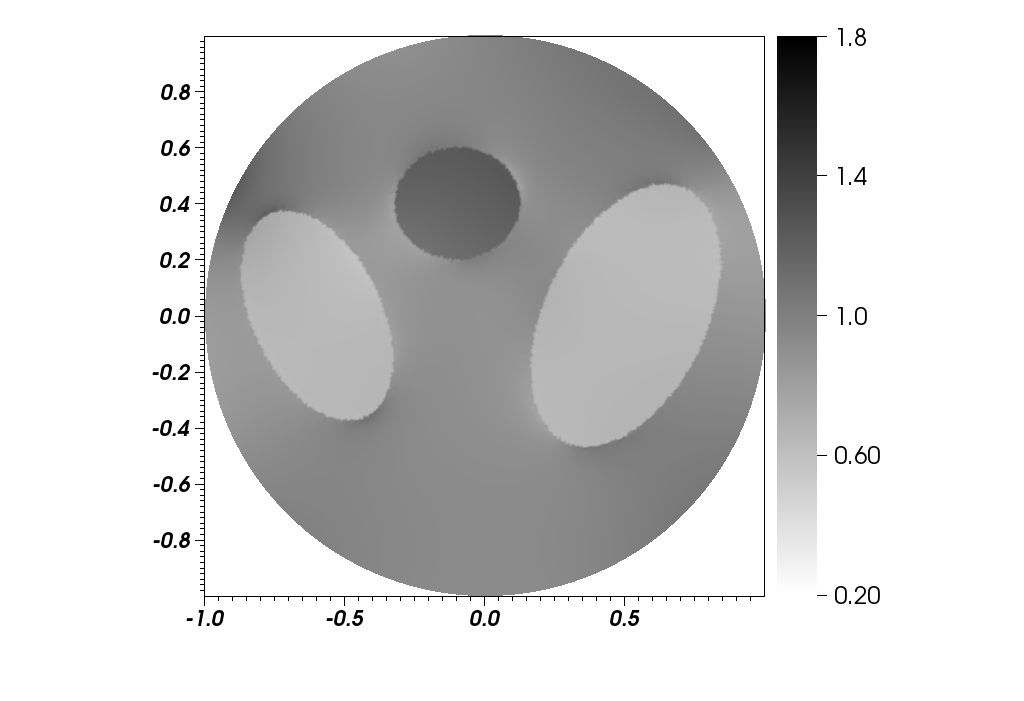}\label{heart_lung_L2_recon}} 
	\subfigure[ $s=1,~\alpha=0.1$]{\includegraphics[scale=0.17,keepaspectratio]{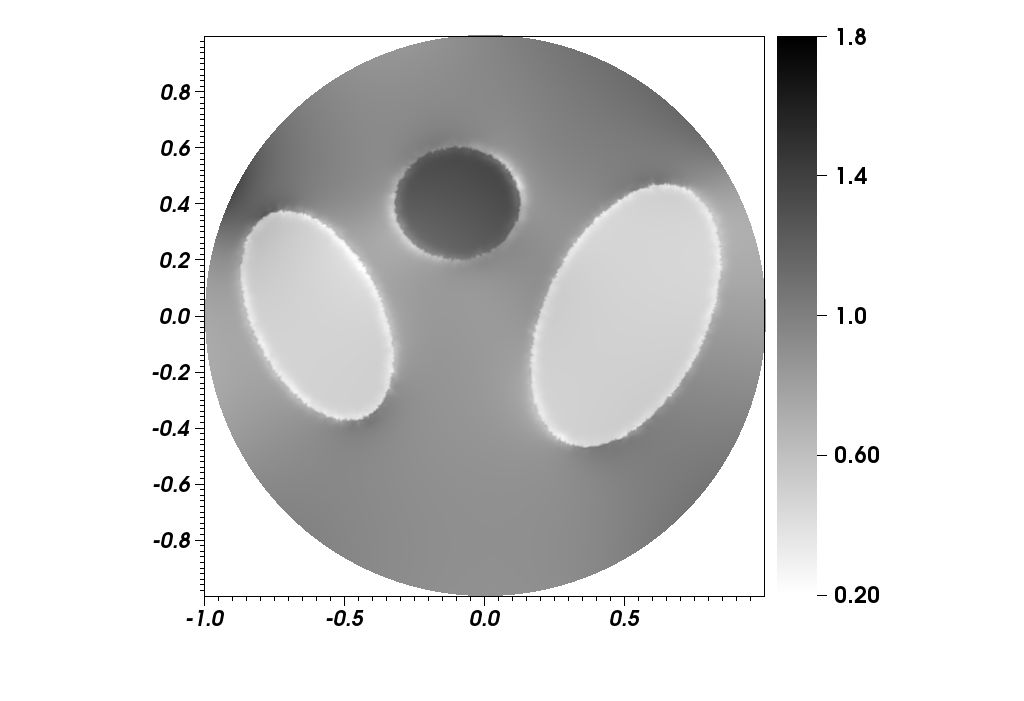}\label{heart_lung_H1}}\\
	\subfigure[$s=0,~\alpha=0.1$, $10\%$ noise]{\includegraphics[scale=0.17,keepaspectratio]{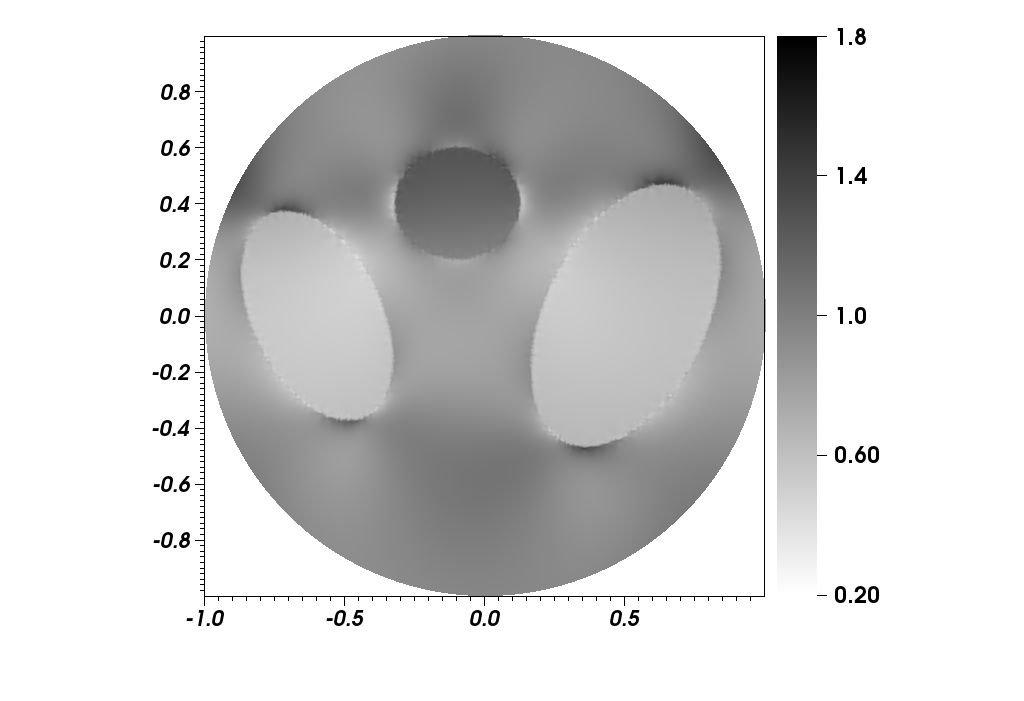}\label{heart_lung_noise_L2_10}}
	\subfigure[$s=1,~\alpha=0.1$, $10\%$ noise]{\includegraphics[scale=0.17,keepaspectratio]{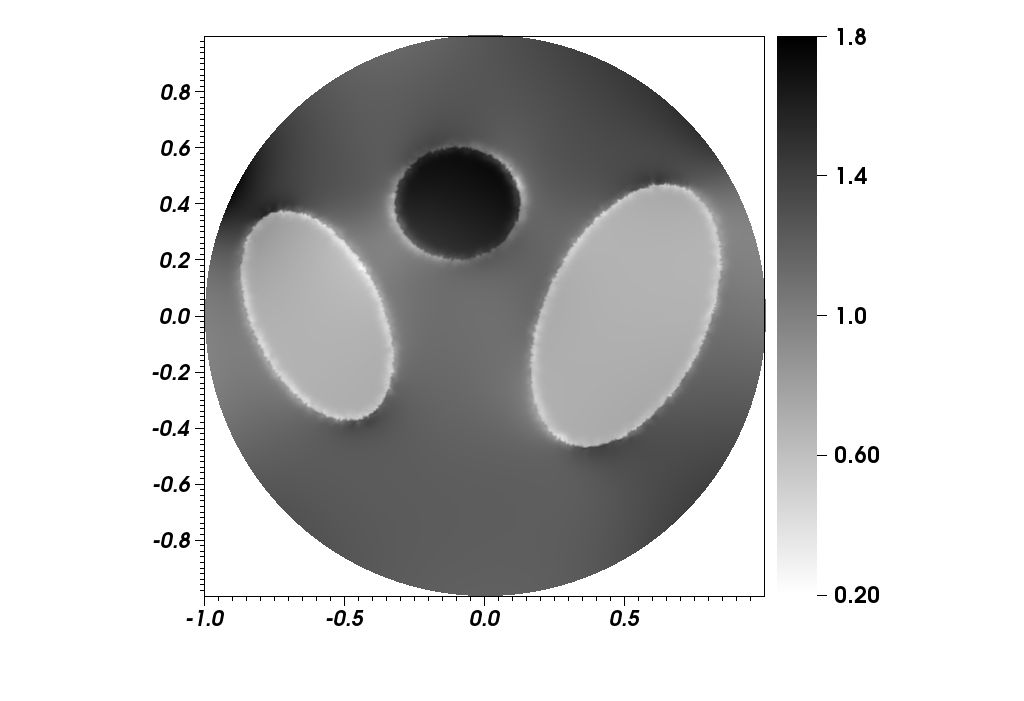}\label{heart_lung_noise_H1_10}}\\
	\subfigure[$s=0,~\alpha=0.1$, $25\%$ noise]{\includegraphics[scale=0.17,keepaspectratio]{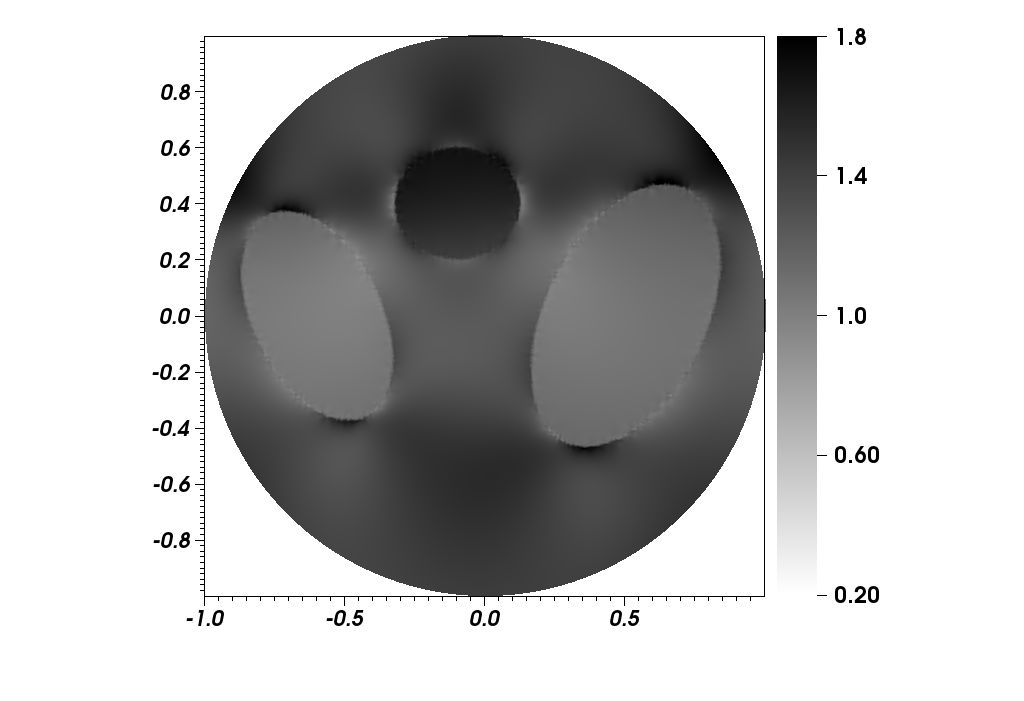}\label{heart_lung_noise_L2_25}}
		\subfigure[$s=1,~\alpha=0.1$, $25\%$ noise]{\includegraphics[scale=0.17,keepaspectratio]{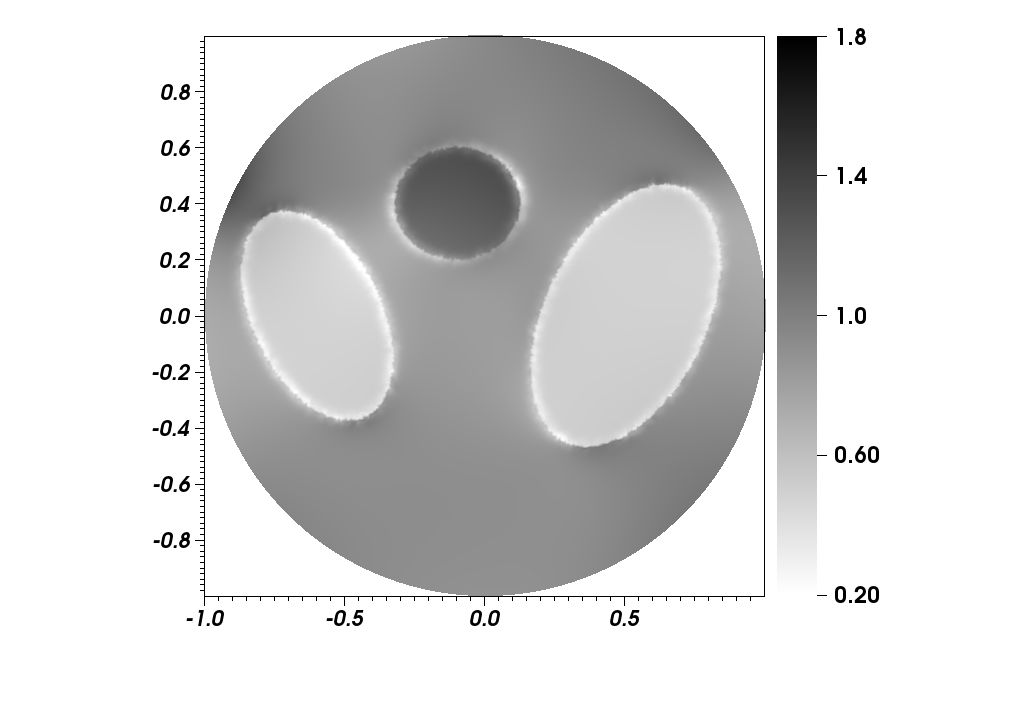}\label{heart_lung_noise_H1_25}}\\
	\caption{Test Case II: The actual and reconstructed heart and lung phantom with $L^{2}$ and $H^{1}$ regularizations and with noiseless/noisy data.}
	\label{heart_lung_L2}
\end{figure}

Test Case III: In the third test case, we consider a phantom where the conductivity $\sigma$ is supported inside a rotated rectangle
\begin{equation}
\sigma(x) =
\begin{cases}
&2.0,\qquad \mbox{ if }\Bigg|\frac{x}{\sqrt{2}}+\frac{y}{\sqrt{2}}-0.2\Bigg| < 0.2 \mbox{ and }\Bigg|\frac{x}{\sqrt{2}}-\frac{y}{\sqrt{2}}-0.2\Bigg| < 0.4,\\
&1.0,\qquad \mbox{ elsewhere.}\\
\end{cases}
\end{equation}
The reconstructions of $\sigma$ with $L^2$ and $H^1$ regularizations and with BC1, BC2 and BC3 given in (\ref{types_bc}) are shown in Figure \ref{rectangle_rotated}.

\begin{figure}[h]
\centering
\subfigure[Actual phantom]{\includegraphics[scale=0.17,keepaspectratio]{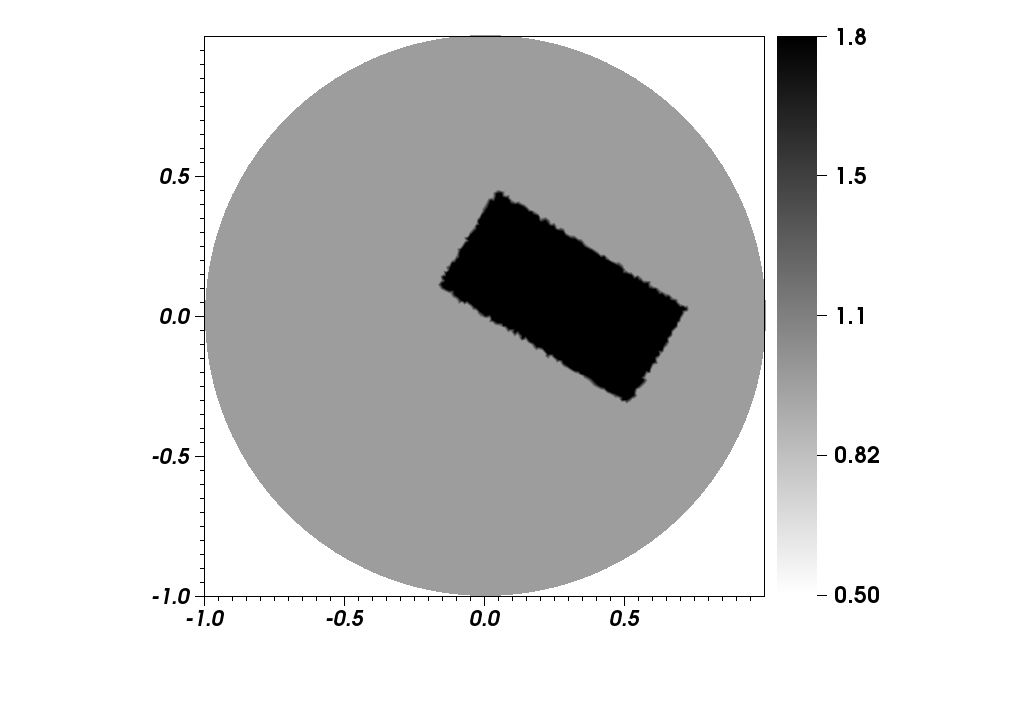}\label{rectangle_rotated_actual}}\\
\subfigure[$s=0,~\alpha=0.1$ with BC1]{\includegraphics[scale=0.17,keepaspectratio]{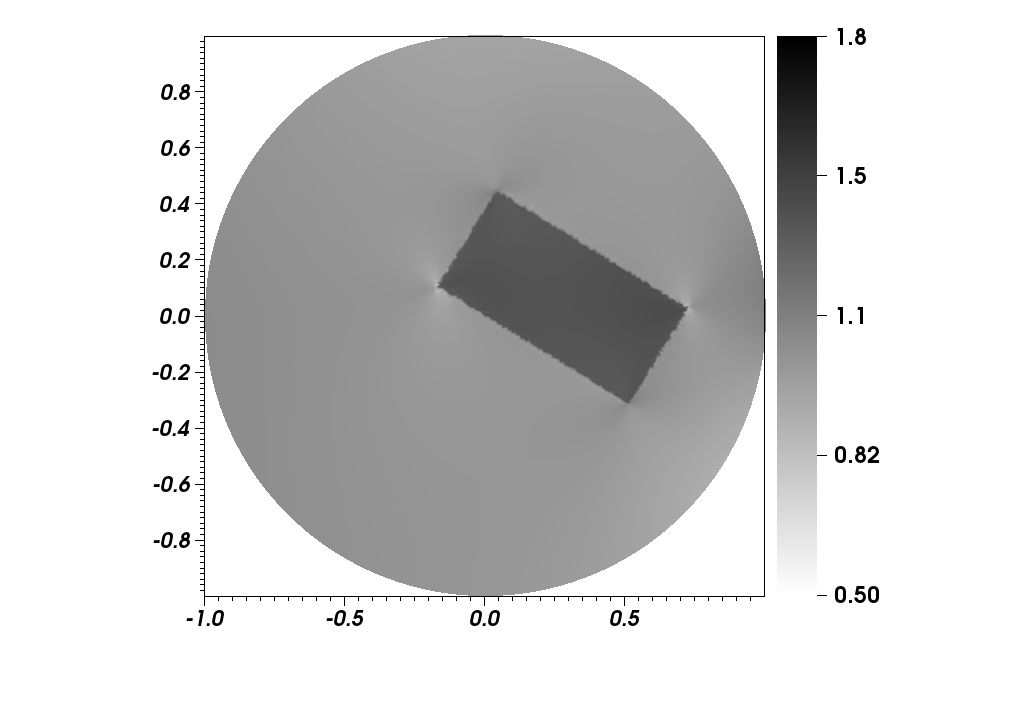}\label{rectangle_rotated_xx+yL2}}
\subfigure[$s=1,~\alpha=0.1$ with BC1]{\includegraphics[scale=0.17,keepaspectratio]{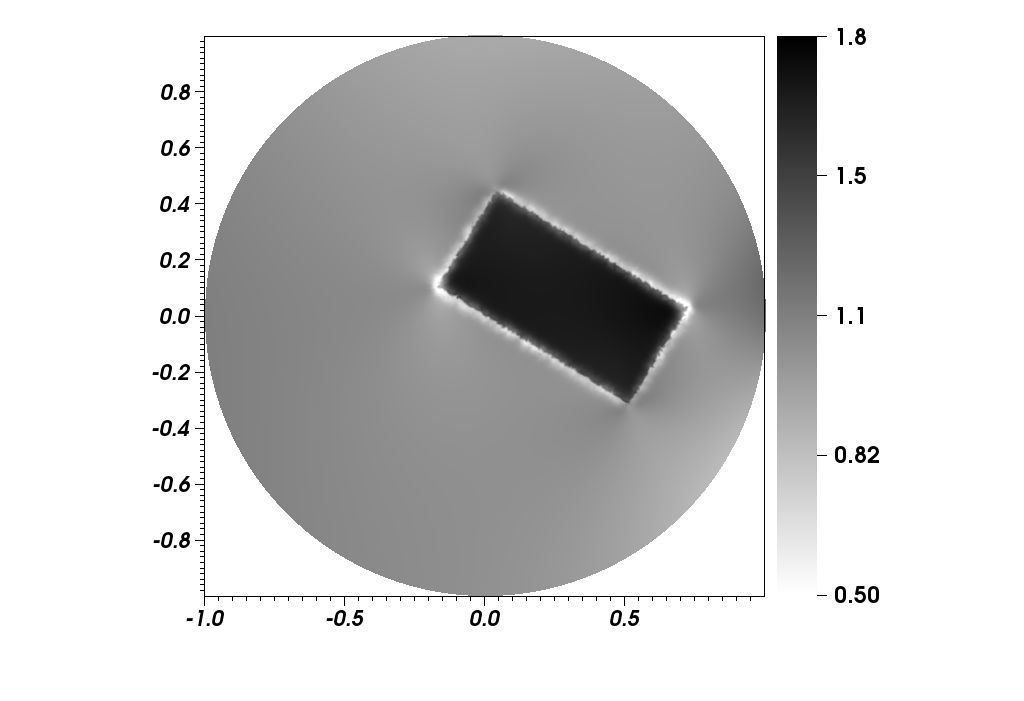}\label{rectangle_rotated_xx+y_H1}}\\
\subfigure[$s=0,~\alpha=0.1$ with BC2]{\includegraphics[scale=0.17,keepaspectratio]{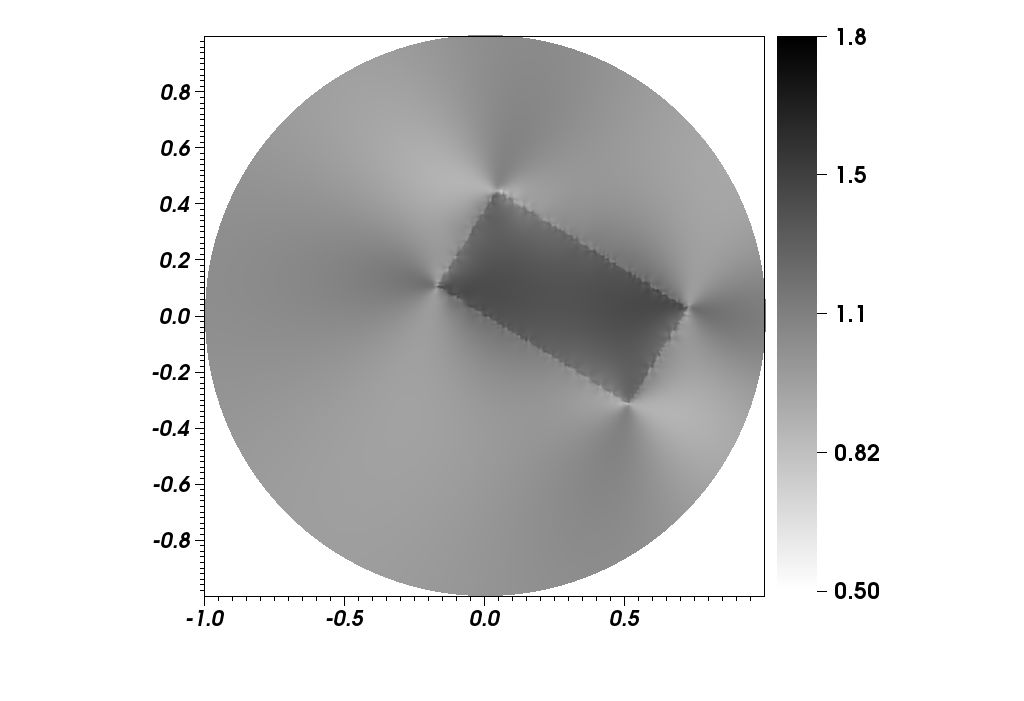}\label{rectangle_rotated_xy_L2}}
\subfigure[$s=1,~\alpha=0.1$ with BC2]{\includegraphics[scale=0.17,keepaspectratio]{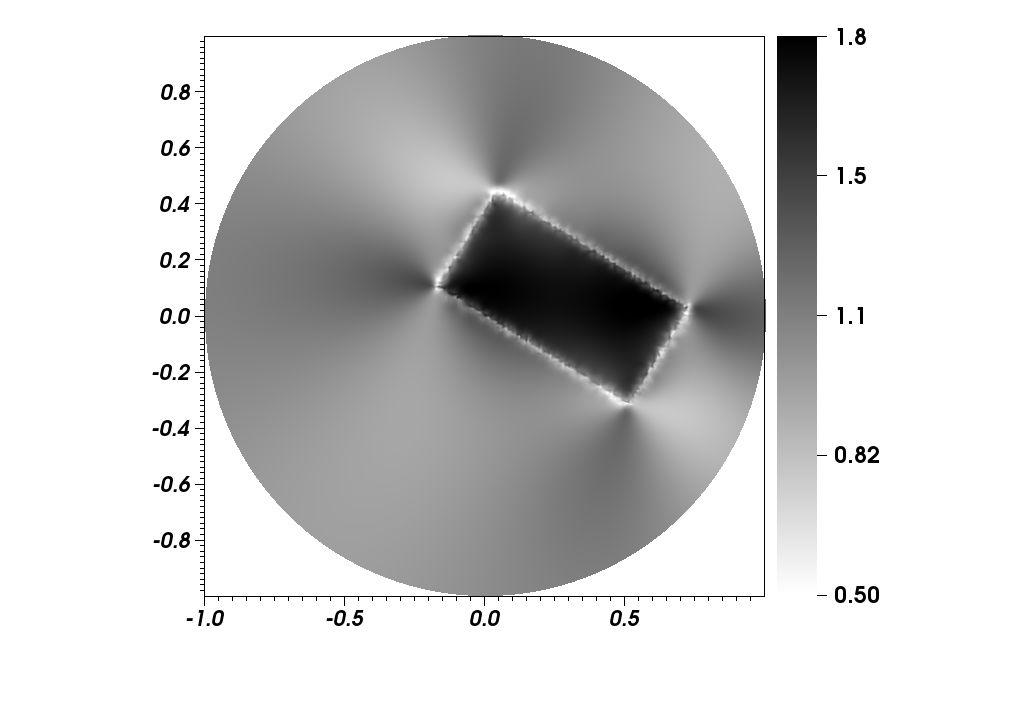}\label{rectangle_rotated_xy_H1}}\\
\subfigure[$s=0,~\alpha=0.1$ with BC3]{\includegraphics[scale=0.17,keepaspectratio]{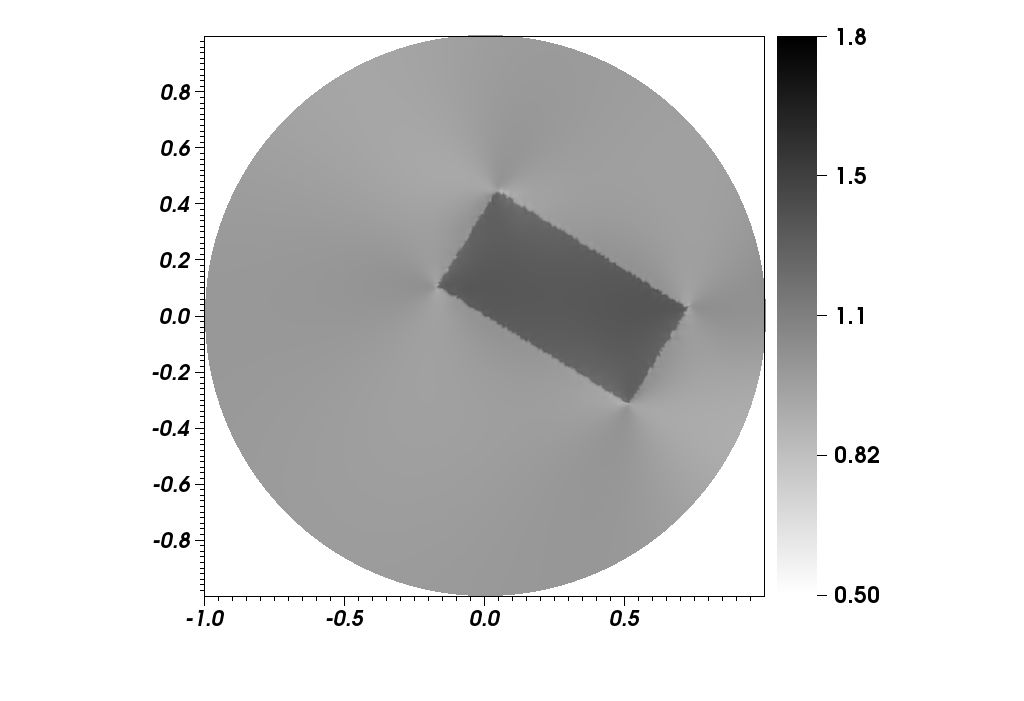}\label{rectangle_rotated_3bc_L2}}
\subfigure[$s=1,~\alpha=0.1$ with BC3]{\includegraphics[scale=0.17,keepaspectratio]{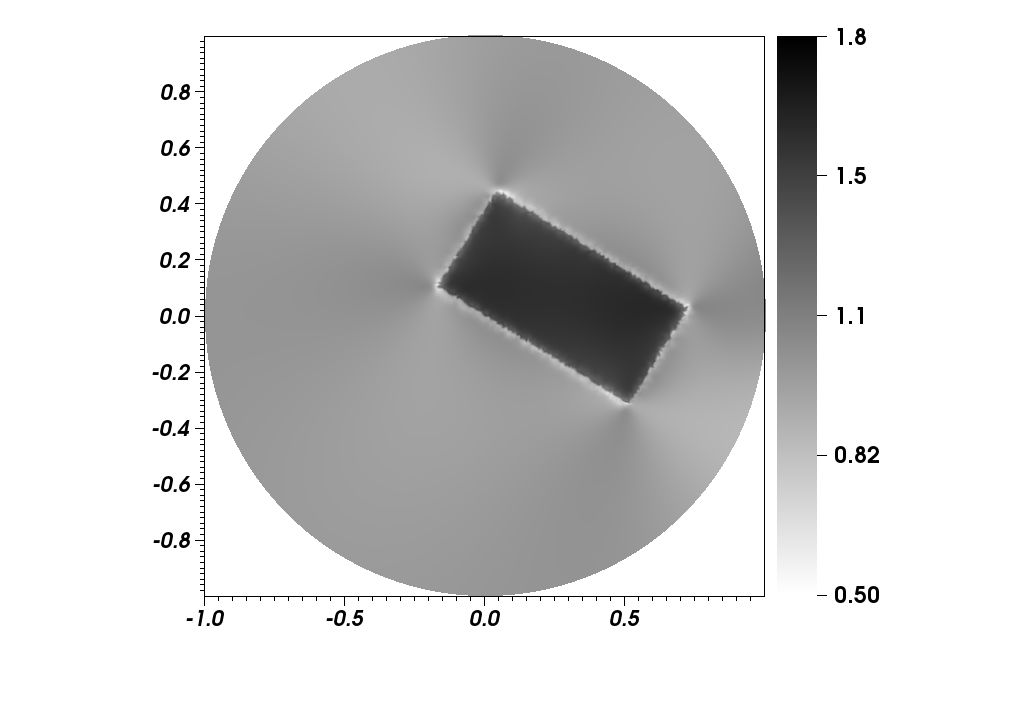}\label{rectangle_rotated_3bc_H1}}\\
    \caption{Test Case III: The actual and reconstructed rotated rectangle phantom with $L^{2}$ and $H^{1}$ regularization, and  with the boundary conditions BC1, BC2 and BC3. }
    \label{rectangle_rotated}
  \end{figure}

 {Our goal with this simulation is to show the effect of boundary conditions on the reconstructed images. As we can see, the reconstructions with boundary conditions BC1 and BC3 are better compared to the ones with BC2,  in the sense that there are fewer artifacts with BC1 and BC3. We note that a similar behavior was previously observed and studied theoretically and numerically for the linearized reconstruction method \cite{Hoffmann_Knudsen,BHK} using microlocal analysis. The characterization of artefacts appearing  in reconstructions from the fully non-linear algorithm is, in our opinion, non-trivial, and beyond the scope of the current work.  
Note further that BC1 with only two boundary conditions yields reconstructions similar in quality to the reconstructions from BC3 with three boundary conditions. This makes us conjecture that the non-linear reconstruction problem in AET is solvable with only two properly chosen boundary conditions.}

{Test Case IV: In the fourth test case, we consider a combination of phantoms supported in a square $S_a = \lbrace (x,y) \in \mathbb{R}^2: -0.1 < x < -0.1, -0.1 < y < -0.1  \rbrace $ with $\sigma=3.0$, 2 disks centered at $(-0.1,0.5)$ with radius 0.2 and $\sigma=2.0$ and at $(0.1,0.5)$ with radius 0.2 and $\sigma=1.0$ and a bean-shaped annulus with value of $\sigma=2.0$ in the annular region and $\sigma=0.5$ in the hole. The plots of the reconstructed $\sigma$ for $s=0,~\alpha=0.1$ and $s=1,~\alpha=0.1$ with BC1 as given in (\ref{types_bc}) and with the parametrix method are shown in Figure \ref{clown_phantom}.}
\begin{figure}[h]
\centering
\subfigure[Actual phantom]{\includegraphics[scale=0.17,keepaspectratio]{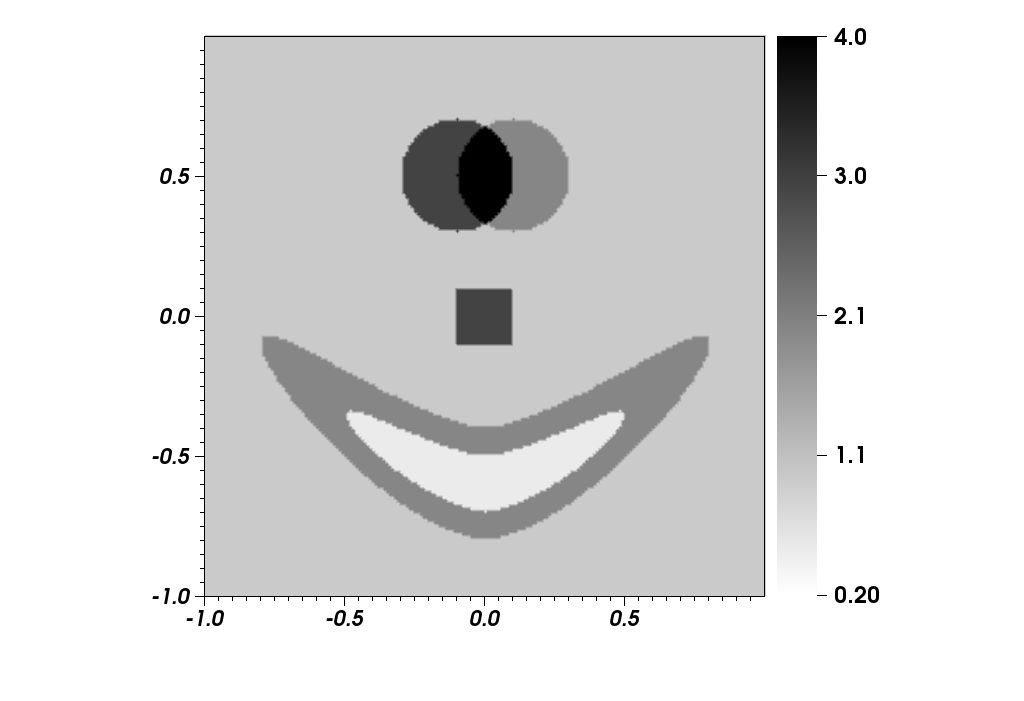}\label{clown_exact}}\hspace{5mm}
\subfigure[Paramterix method \cite{Kuchment-Kunyansky-AET} (Done in Matlab)]{\includegraphics[height=1.65in,width=1.95in]{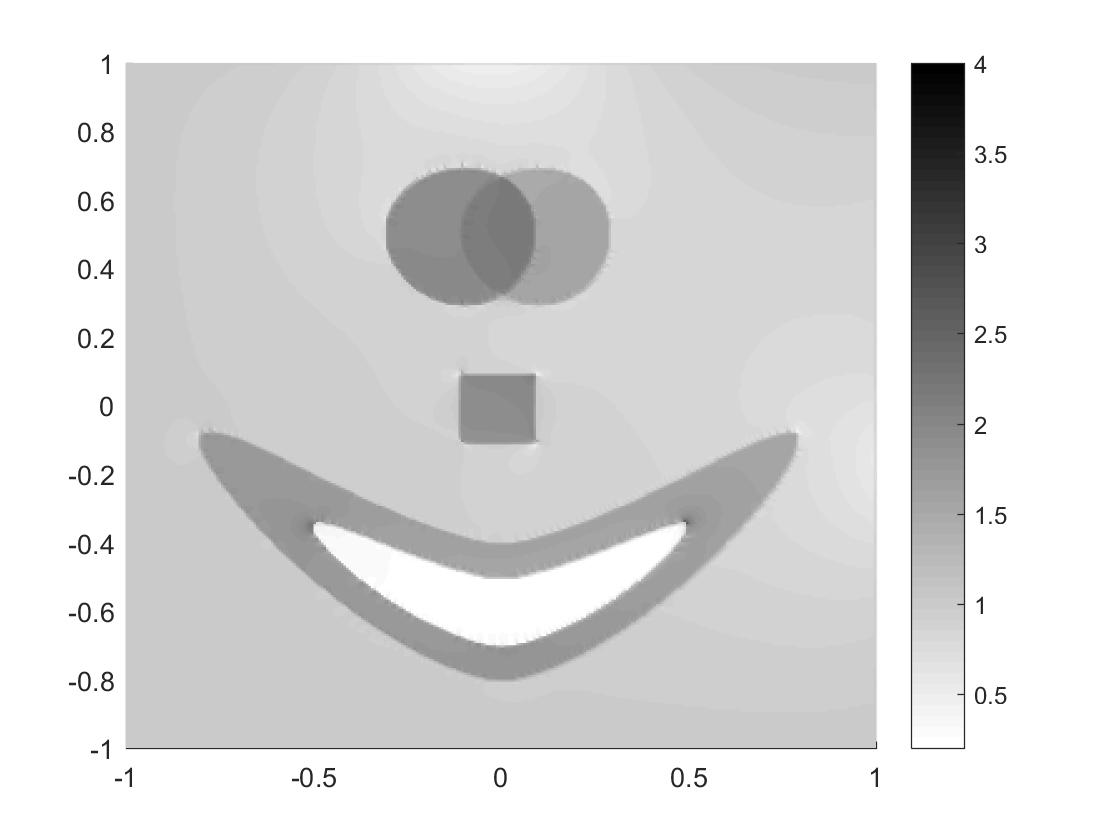}\label{clown_paramterix}}\\
\subfigure[$s=0,~\alpha=0.1$]{\includegraphics[scale=0.17,keepaspectratio]{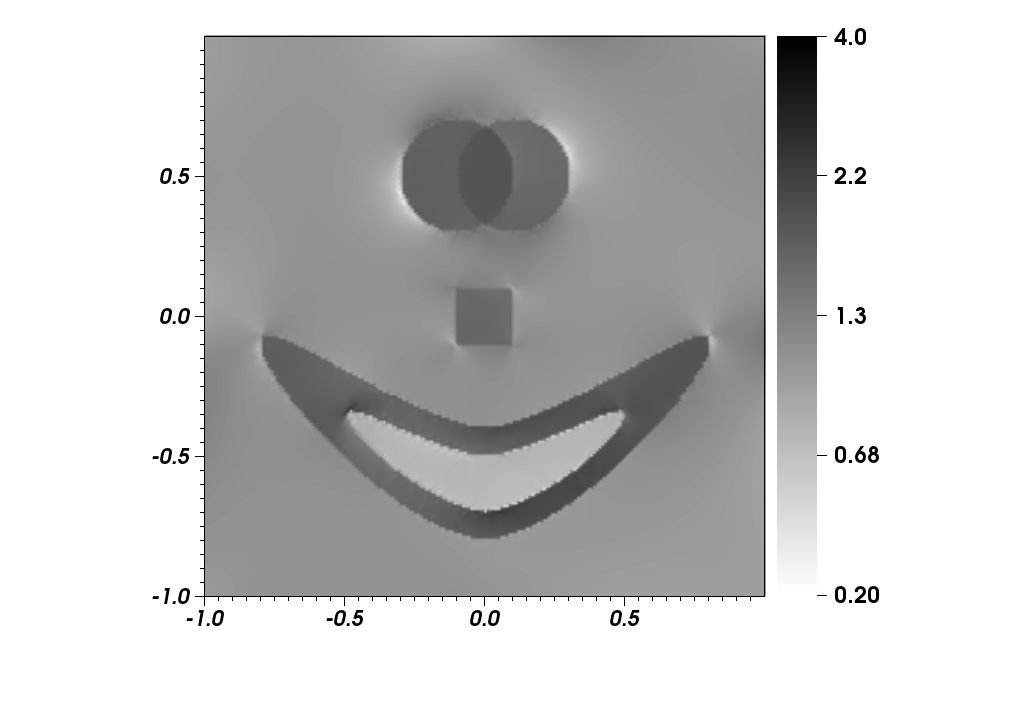}\label{clown_L2}}
\subfigure[ $s=1,~\alpha=0.1$]{\includegraphics[scale=0.17,keepaspectratio]{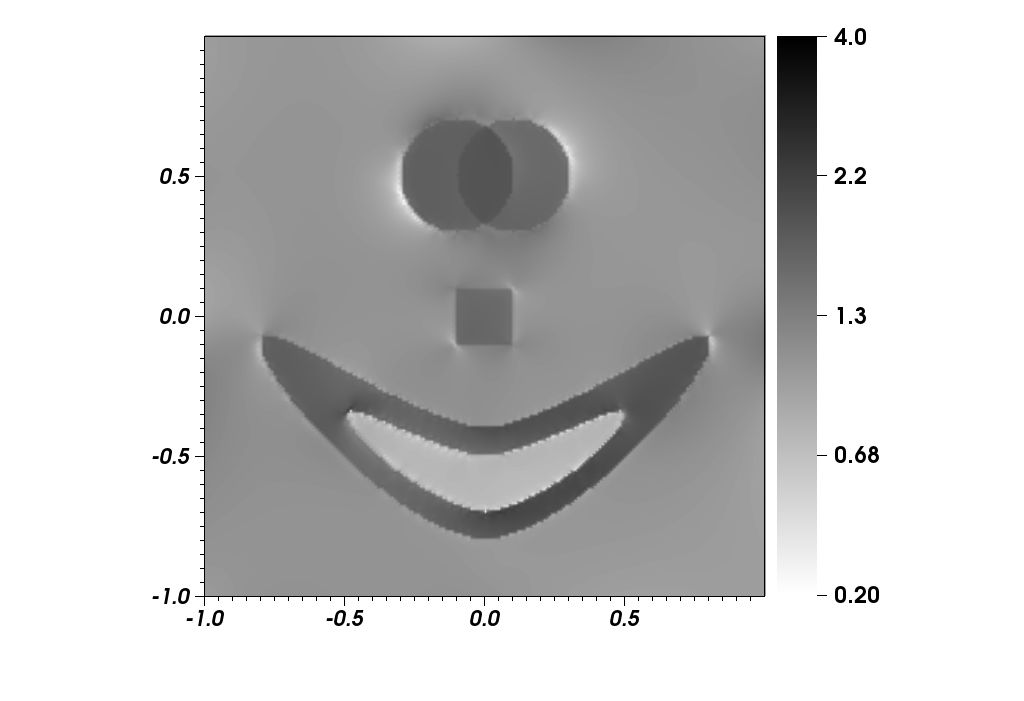}\label{clown_H1}}\\
    \caption{Test Case IV: The actual and reconstructed images of a phantom with an inclusion as well as with self-intersections.}
    \label{clown_phantom}
  \end{figure}

{Our numerical procedure performs well for a phantom with an inclusion as well as with a  self-intersection as shown in Figure \ref{clown_phantom}. Note that the inclusion is clearly visible. Furthermore for the two disks with intersections, the intersecting region is clearly distinguishable as well. We compare with the parametrix method of \cite{Kuchment-Kunyansky-AET} (see Figure \ref{clown_paramterix}), and similar to what was observed in the case of Figure \ref{disk_sigma}, there is a substantial loss of contrast with the parametrix method.}

\section{Conclusion}\label{sect:conclusion}
In this work, we considered a non-linear computational approach to acousto-electric tomography involving the reconstruction of the electric conductivity of a medium from interior power density distribution. We formulated the inverse problem as a non-linear optimization problem, showed the existence of a minimizer and developed a non-linear conjugate gradient (NLCG) scheme for the reconstruction of the conductivity of the medium from interior power density functionals. We presented several numerical simulations showing the robustness of the NLCG algorithm. We observed that the $H^1$ regularization, in general, reconstructed images with better contrast compared to the $L^2$ regularization which reconstructed images with better resolution. The proposed non-linear framework is versatile and can be applied to other hybrid imaging modalities as well.

\section*{Acknowledgements}
Knudsen would like to acknowledge support from the Danish Council for Independent Research | Natural Sciences. Krishnan was supported in part by US NSF grant DMS 1616564.
Additionally, he and Roy benefited from the support of Airbus Corporate Foundation Chair grant titled
``Mathematics of Complex Systems'' established at TIFR CAM and TIFR ICTS,
Bangalore, India.

\bibliographystyle{plain}
\bibliography{References_bib_file}
\end{document}